\documentclass[10pt,reqno]{amsart}

\usepackage[T1]{fontenc}
\usepackage[utf8]{inputenc}
\usepackage{lmodern}
\usepackage{microtype}
\usepackage{amsmath,amssymb,mathtools}
\usepackage[a4paper,margin=26mm]{geometry}
\usepackage{xcolor}
\usepackage[colorlinks=true,linkcolor=blue!45!black,
  citecolor=blue!45!black,urlcolor=blue!45!black]{hyperref}

\newtheorem{theorem}{Theorem}
\newtheorem{proposition}[theorem]{Proposition}

\newtheorem{corollary}[theorem]{Corollary}
\theoremstyle{remark}
\newtheorem{remark}[theorem]{Remark}

\DeclareMathOperator{\Spin}{Spin}

\newcommand{\R}{\mathbb R}
\newcommand{\Sph}{\mathbb S}
\newcommand{\SM}{\mathbb S M}
\newcommand{\tr}{\operatorname{tr}}
\newcommand{\diver}{\operatorname{div}}
\newcommand{\Ric}{\operatorname{Ric}}
\newcommand{\vol}{\operatorname{vol}}
\newcommand{\Rad}{\operatorname{Rad}}
\newcommand{\Id}{\operatorname{Id}}
\newcommand{\dd}{\,\mathrm d}
\newcommand{\ip}[2]{\left\langle #1,#2\right\rangle}
\newcommand{\norm}[1]{\left\lvert #1\right\rvert}
\newcommand{\Norm}[1]{\left\lVert #1\right\rVert}

\title[Spacetime mean curvature of DEC spin fill-ins]
{Dirac eigenvalues and spacetime mean curvature estimates for DEC spin fill-ins}

\author{Simon Raulot}
\address{Universit\'e de Rouen Normandie, CNRS, Normandie Universit\'e,
LMRS UMR 6085, F-76000 Rouen, France}
\email{simon.raulot@univ-rouen.fr}
\date{\today}
\subjclass[2020]{Primary 53C27; Secondary 53C21, 58J32, 58J50, 83C60.}
\keywords{Spin fill-ins, dominant energy condition, Dirac eigenvalues, Jang equation, spacetime mean curvature, apparent horizons, hyperspherical radius.}

\begin{document}

\begin{abstract}
	We establish spacetime counterparts of fill-in inequalities for spin initial data sets satisfying the dominant energy condition in dimensions three through seven, controlling the minimum and the integral of the norm of the spacetime mean curvature vector by intrinsic boundary data. The proof relies on a Reilly--MIT estimate for complete spin manifolds with compact boundary. As further consequences, under suitable nonnegativity assumptions, Gromov's hyperspherical-radius fill-in inequality and B\"ar's recent total mean-curvature estimate extend to complete, possibly noncompact spin manifolds with compact boundary.
\end{abstract}

\maketitle


\section{Introduction and main results}


A basic question in the geometry of initial data sets is to determine to what extent the dominant energy condition controls the extrinsic geometry of the boundary. More specifically, let $(\Sigma^{n-1},\gamma)$ be a closed oriented Riemannian manifold endowed with a spin structure, and let $H,P\in C^\infty(\Sigma)$.
One may ask whether there exists a compact connected spin initial data set $(\Omega^n,g,K)$ whose induced boundary metric is $\gamma$, whose induced spin structure is the prescribed one, and such that
\[
H_g=H,\qquad \tr_\Sigma K=P,\qquad \mu\geq\norm J_g.
\]
Here $H_g$ denotes the mean curvature of $\partial\Omega$ with respect to the outward unit normal, with the convention that it is positive on Euclidean balls, and $\tr_\Sigma K$ is the trace of the restriction of $K$ to $T\Sigma$. The energy and momentum densities $\mu$ and $J$ are given by
\[
2\mu=R_g+(\tr_gK)^2-\norm K_g^2,
\qquad
J=\diver_g\bigl(K-(\tr_gK)g\bigr).
\]
Here and throughout, the spin structure on $\Sigma$ is fixed, and every boundary identification $\partial\Omega\simeq\Sigma$ is required to preserve it. This compatibility will be understood, and spin structures will therefore be omitted from the notation. We refer to $(\Sigma,\gamma,H,P)$ as a spin boundary data set and call any such
$(\Omega,g,K)$ a \emph{DEC spin fill-in} of these data. 

When $K\equiv0$, this definition reduces to the usual notion of a spin fill-in with nonnegative scalar curvature. More generally, taking $K=cg$ with $c$ constant recovers the Riemannian fill-in problem with scalar curvature bounded from below. For these fill-in problems, see \cite{Gromov,ShiTam,Jauregui,MantoulidisMiao,Miao, ShiWangWei,BTW,AmmannLockman,BaerTotal}.

Unlike in the time-symmetric fill-in problem, the natural boundary quantity is not the mean curvature alone. If \(H>|P|\), the boundary data are said to be \emph{untrapped}, and the associated spacetime mean-curvature vector is spacelike with norm
\[
\norm{\mathcal H}=\sqrt{H^2-P^2}.
\]
It is then natural to ask whether the existence of a DEC spin fill-in imposes upper bounds, depending only on the intrinsic spin geometry of the boundary, for the minimum and the integral of $\norm{\mathcal H}$.

The minimum of $\norm{\mathcal H}$ is naturally related to the spectrum of the boundary Dirac operator. For each connected component $\Sigma_j$, let $D^{\Sigma_j}$ denote its intrinsic Dirac operator and
set
\[
\lambda(\Sigma_j)
:=\min\bigl\{|\lambda|:\lambda\in\operatorname{Spec}(D^{\Sigma_j})\bigr\}.
\]
Thus, $\lambda(\Sigma_j)=0$ if and only if $D^{\Sigma_j}$ has a nontrivial kernel.

In dimension three, the author proved in \cite{Raulot} that, if a connected untrapped spin boundary data set admits a DEC spin fill-in without apparent horizons in its interior, then
\[
\lambda(\Sigma)
\geq\frac12\min_{\Sigma}\norm{\mathcal H}.
\]
Here a closed two-sided hypersurface $S\subset\operatorname{int}\Omega$ is called an \emph{apparent horizon} if, for one choice of unit normal,
\[
H_S+\tr_S K\equiv0
\qquad\text{or}\qquad
H_S-\tr_S K\equiv0.
\]
This condition is independent of the choice of unit normal. The assumption excluding apparent horizons was used to ensure the existence of a smooth solution of Jang's equation on the entire fill-in. A first contribution of the present paper is to show that this assumption is unnecessary. It also extends the estimate to dimensions $3\leq n\leq7$, the regularity range relevant to the construction of the Jang graph. More precisely, we prove the following.
\begin{theorem}\label{thm:initial-spectral}
	Let $3\leq n\leq7$, and let $(\Omega^n,g,K)$ be a DEC spin fill-in of an untrapped spin boundary data set $(\Sigma,\gamma,H,P)$. Then, for every connected component $\Sigma_j$, the Dirac operator $D^{\Sigma_j}$ has trivial kernel and
	\begin{equation}\label{eq:initial-spectral}
		\lambda(\Sigma_j)
		\geq\frac12\min_{\Sigma_j}\norm{\mathcal H}.
	\end{equation}
	If equality holds in \eqref{eq:initial-spectral} for some component, then $\Sigma$ is connected and $(\Omega,g)$ admits a local isometric immersion, as a spacelike hypersurface with second fundamental form $K$, into a Ricci-flat Lorentzian product whose Riemannian factor carries a nonzero parallel spinor. Moreover,
	$\norm{\mathcal H}$ is constant on $\Sigma$.
\end{theorem}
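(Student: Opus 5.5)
The plan is to reduce to a Riemannian statement on a Jang graph and then apply the Reilly--MIT estimate for complete spin manifolds with compact boundary mentioned in the abstract. We do not assume the absence of apparent horizons, so we cannot use a smooth solution of Jang's equation on all of $\Omega$. Instead we use the blow-up theory of Schoen--Yau, Eichmair and Andersson--Metzger in dimensions $3\le n\le 7$. We first solve the capillarity-regularized Jang equation $\mathcal J(f)=\tau f$ with the Dirichlet data adapted to the boundary (in the spirit of Han--Khuri), using the untrapped condition $H>|P|$ to obtain boundary barriers and gradient estimates near $\Sigma$. Letting $\tau\to0$, the graphs converge to a possibly disconnected smooth hypersurface $\hat M\subset\Omega\times\R$. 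Each component is either a graph over an open set $U\subset\Omega$ containing a collar of $\Sigma$, or a cylinder over a MOTS/MITS. The component $\hat\Omega$ through $\Sigma\times\{f|_\Sigma\}$ is a complete Riemannian manifold $(\hat\Omega,\hat g=g+df^2)$ with compact boundary isometric to $(\Sigma,\gamma)$. Its ends are asymptotically cylindrical over apparent horizons, and it is spin because it is a graph over $U\subset\Omega$.

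The key identities on $\hat\Omega$ are the standard ones. The Schoen--Yau identity gives $R_{\hat g}\ge 2(\mu-|J|)+|h-K|^2-2|X|^2-2\diver X$ for a bounded vector field $X$. The boundary mean curvature satisfies $\hat H\ge\sqrt{H^2-P^2}=|\mathcal H|$, or an equality up to a term in $\langle X,\nu\rangle$, once $f$ is chosen so that the boundary behaves like the Jang graph of untrapped data. We then run the Reilly/Lichnerowicz argument with the modified connection $\nabla_i+\tfrac12 X_i$ (or the operator $D-\tfrac12 X\cdot$). For a harmonic spinor on $\hat\Omega$ with prescribed APS-type boundary value given by an eigenspinor of $D^{\Sigma_j}$ (the MIT estimate), this produces
\[
0\le\int_{\hat\Omega}\Bigl(\tfrac12 (R_{\hat g}+2|X|^2+2\diver X)\Bigr)|\psi|^2+\ldots\le \int_\Sigma\Bigl(\ip{D^\Sigma\psi}{\psi}-\tfrac12 \hat H|\psi|^2\Bigr).
\]
Taking $\psi$ from an eigenspinor with eigenvalue $\lambda$, $|\lambda|=\lambda(\Sigma_j)$, extended by zero on the other components, gives $\lambda(\Sigma_j)\ge\tfrac12\min_{\Sigma_j}\hat H\ge\tfrac12\min_{\Sigma_j}|\mathcal H|$. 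The same argument with $\lambda=0$ forces $\psi\equiv0$, which gives the triviality of the kernel. We take the Reilly--MIT estimate for complete manifolds with compact boundary as the available earlier result. Its applicability needs $\psi\in L^2$ with decay on the cylindrical ends, which follows from the ends being asymptotically product with positive Dirac spectrum; any residual $\diver X$ boundary term at infinity vanishes since $X$ decays along the blow-up ends.

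For rigidity, equality forces every inequality to be saturated. Then $\psi$ is parallel for the modified connection, $\mu=|J|$, $h=K$ on the graph, and $\hat H=|\mathcal H|$ is constant because $\psi$ restricts to an eigenspinor with $\tfrac12\hat H=\lambda$ pointwise. A nonzero modified-parallel spinor has constant norm, so it cannot vanish on other boundary components; this gives the connectedness of $\Sigma$. The rigid case also rules out blow-up, so $f$ is smooth on all of $\Omega$. The graph of $f$ in $(\hat\Omega\times\R,-dt^2+\hat g)$, with $\hat g$ carrying a parallel spinor and hence Ricci-flat, then has second fundamental form $K$ because $h=K$. The step I expect to be hardest is the boundary analysis: producing Jang solutions whose boundary values give $\hat H\ge|\mathcal H|$ exactly, plus the control of the $X$-terms on the boundary, without excluding interior horizons. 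I would first try Dirichlet data $f|_\Sigma$ solving the auxiliary equation that makes the graph's boundary mean curvature equal $\sqrt{H^2-P^2}$, as in Wang--Yau-type constructions. Failing that, I would use large constant Dirichlet data with a limiting argument.
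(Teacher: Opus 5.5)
Your outline (Jang reduction, a modified spinorial Reilly formula with an MIT condition on the complete Jang graph, rigidity from the equality case) is the paper's. However, the step you flag as hardest is left open, and you aim it at an inequality that is false for the natural construction. Integrating the term $\diver X\,\norm{\psi}^2$ of the modified Lichnerowicz formula by parts leaves a boundary term, so the boundary weight is $\beta=\widehat H-q(\widehat N)$, not $\widehat H$. You allude to an $\ip{X}{\nu}$ correction but then drop it from your Reilly inequality and from your conclusion. The bulk weight is $\frac14\bigl(\widehat R-2\norm{q}^2+2\widehat\diver q\bigr)$, which is exactly what \eqref{eq:SY} makes nonnegative. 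Your recalled Schoen--Yau identity has the wrong sign in front of $\norm{X}^2$, and with that sign the bulk term would not be controlled.

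Now take the Jang graph with $u=0$ on $\partial\Omega$. There $\widehat N=(N+N(u)\partial_t)/v$, and since the slice $\{t=0\}$ is totally geodesic, $\widehat H=H/v$. This lies below $\norm{\mathcal H}$, for instance, wherever $P=0$ and $N(u)\neq0$. What is true, and all that is needed, is the combined inequality \eqref{eq:Jang-boundary}. Jang's equation evaluated on $\partial\Omega$, where $Du=N(u)N$, gives $q(\widehat N)=N(u)P-N(u)^2H/v$. Hence $\widehat H-q(\widehat N)=vH-N(u)P$, and since $v^2-N(u)^2=1$,
\[
(vH-N(u)P)^2-(H^2-P^2)=\bigl(vP-N(u)H\bigr)^2\geq0,
\]
while $vH-N(u)P>0$ because $v>\norm{N(u)}$ and $H>\norm{P}$. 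So no special boundary data are required. Your first remedy would in fact break the argument: nonconstant Dirichlet data $f|_\Sigma$ give the boundary of the graph the metric $\gamma+\dd f^2$. It is then no longer isometric to $(\Sigma,\gamma)$, and the comparison with $\lambda(\Sigma_j)$ is lost.

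Your justification of the analysis on the noncompact graph is also not sound, and this is where the paper's key idea enters. You assert that an $L^2$ solution of the unshifted MIT problem exists because the ends are asymptotically cylindrical with positive Dirac spectrum. That would require the Dirac operator of every horizon cross-section, with the metric and spin structure induced by $g$, to be invertible, plus a perturbation argument since the convergence to the cylinders is only asymptotic. This is not among the hypotheses and you do not establish it. Without it, $0$ can lie in the essential spectrum, so the unshifted problem need not be Fredholm. Your claim that $X=q$ decays along the blow-up ends is likewise unfounded, and in fact no decay is needed.

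The paper instead solves $(D-i\varepsilon)\Psi_\varepsilon=0$, $P^+\Psi_\varepsilon=\varphi$. Since $\Norm{(D\mp i\varepsilon)\psi}^2=\Norm{D\psi}^2+\varepsilon^2\Norm{\psi}^2$ on interior compactly supported spinors, the shifted operator is coercive at infinity on any complete manifold. The B\"ar--Ballmann theory then makes the MIT realization Fredholm, and Green's formula kills its kernel and cokernel. The Reilly identity extends to $\Psi_\varepsilon$ by cutoffs with $\Norm{\dd\chi_R}_{L^\infty}\to0$, using only $\mathcal E\geq0$. The price is a term $-\varepsilon^2\Norm{\Psi_\varepsilon}^2_{L^2(M)}\geq-\frac{\varepsilon}{2}\Norm{\varphi}^2_{L^2(\Sigma)}$ coming from \eqref{eq:balance}, which disappears as $\varepsilon\to0$.

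Quoting Theorem~\ref{thm:complete-spectral} as a black box, as the paper does, is legitimate. But then your assumptions on the ends are superfluous, and what must be checked is $\min_{\Sigma_j}\beta>0$, which is the boundary computation above.

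Finally, in the equality case you assert without argument that blow-up is excluded. The paper obtains this from the rigidity part of Theorem~\ref{thm:complete-spectral}: equality forces $q=0$, a parallel spinor (so $\widehat g$ is Ricci-flat), and constant boundary mean curvature $\widehat H=b_j>0$. Kasue's inradius estimate then makes $M_j$ compact. That is incompatible with any blow-up horizon, along which $u$ would be unbounded.
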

When equality holds and $n=3$, the ambient product is locally isometric to Minkowski spacetime, since every three-dimensional Ricci-flat Riemannian metric is flat.

The estimate in Theorem~\ref{thm:initial-spectral} combines directly with B\"ar's upper bound involving the hyperspherical radius $\Rad (\Sigma_j)$ of $\Sigma_j$. More precisely, B\"ar proved in \cite{BaerRadius} that
\begin{equation}\label{eq:Baer-radius}
	\lambda(\Sigma_j)
	\leq
	\frac{n-1}{2\Rad(\Sigma_j)},
\end{equation}
where $\Rad (\Sigma_j)$ is defined as the supremum of all $r>0$ such that there exists a $1$-Lipschitz map \(\Sigma_j\longrightarrow\Sph^{n-1}(r)\) of nonzero degree. Combining this estimate with Theorem~\ref{thm:initial-spectral} gives the following spacetime counterpart of Gromov's fill-in inequality
\cite[p.~110]{Gromov}.

\begin{corollary}\label{cor:radius}
Under the assumptions of Theorem~\ref{thm:initial-spectral}, every connected component $\Sigma_j$ satisfies
\[
 \min_{\Sigma_j}\norm{\mathcal H}\leq \frac{n-1}{\Rad(\Sigma_j)}.
\]
\end{corollary}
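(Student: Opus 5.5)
The corollary follows by chaining Theorem~\ref{thm:initial-spectral} with B\"ar's estimate \eqref{eq:Baer-radius}. Fix a connected component $\Sigma_j$. Theorem~\ref{thm:initial-spectral} applies to the given DEC spin fill-in and yields
\[
\frac12\min_{\Sigma_j}\norm{\mathcal H}\leq\lambda(\Sigma_j).
\]
Here $\norm{\mathcal H}=\sqrt{H^2-P^2}$ is well defined and positive because the data are untrapped. Next I apply \eqref{eq:Baer-radius} to the closed spin manifold $\Sigma_j$, whose dimension is $n-1\geq2$, with the spin structure induced from $\Omega$. This gives $\lambda(\Sigma_j)\leq (n-1)/(2\Rad(\Sigma_j))$. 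Concatenating the two inequalities and multiplying by $2$ gives
\[
\min_{\Sigma_j}\norm{\mathcal H}\leq\frac{n-1}{\Rad(\Sigma_j)}.
\]

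The only point that needs care is the degenerate range of $\Rad(\Sigma_j)$, together with the hypotheses under which \eqref{eq:Baer-radius} holds. If no $1$-Lipschitz map of nonzero degree to any round sphere exists, the supremum is over the empty set. I will use the convention $\Rad(\Sigma_j)=0$ and read the right-hand side as $+\infty$, so the inequality holds trivially.

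If $\Rad(\Sigma_j)=+\infty$, B\"ar's bound forces $\lambda(\Sigma_j)=0$, so $D^{\Sigma_j}$ has nontrivial kernel. This contradicts the first assertion of Theorem~\ref{thm:initial-spectral}. Hence $\Rad(\Sigma_j)<\infty$ under our hypotheses, which is a small bonus worth noting.

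For finite positive $\Rad(\Sigma_j)$ the argument above applies verbatim. I expect no genuine obstacle beyond checking that B\"ar's theorem is stated for closed connected spin manifolds with the same definition of $\lambda$ and $\Rad$ used here, which is the case.
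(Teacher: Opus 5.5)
Your proposal is correct and is exactly the paper's argument: the corollary is obtained by concatenating the lower bound $\lambda(\Sigma_j)\geq\frac12\min_{\Sigma_j}\norm{\mathcal H}$ from Theorem~\ref{thm:initial-spectral} with B\"ar's upper bound \eqref{eq:Baer-radius}. Your treatment of degenerate values of $\Rad(\Sigma_j)$ is a harmless addition. The case $\Rad(\Sigma_j)=0$ in fact never occurs, since every closed oriented manifold admits a degree-one map to a sphere, which becomes $1$-Lipschitz after shrinking the target.
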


This contains two familiar Riemannian fill-in inequalities in a single statement. Indeed, if $K=cg$ for a constant $c\in\R$, then the dominant
energy condition is equivalent to $R_g\geq-n(n-1)c^2$, while $P=(n-1)c$. On every boundary component, Corollary~\ref{cor:radius} therefore gives
\[
 \min_{\Sigma_j} H
 \leq(n-1)\sqrt{c^2+
 \Rad (\Sigma_j)^{-2}}.
\]
Within the dimensional range considered here, the case $c=0$ recovers the preceding spin fill-in estimate of Gromov, which also follows from B\"ar's spectral estimate \cite{BaerRadius}; its rigidity statement was proved by Cecchini, Hirsch, and Zeidler \cite{CHZ}. The case $c=1$ is the inequality of Brendle, Tsiamis, and Wang \cite{BTW}, whose rigidity statement was recently proved by Ammann and Lockman \cite{AmmannLockman}.

Gromov also conjectured \cite[p.~232]{Gromov} that the total mean curvature of a fill-in can be bounded from above in terms of the intrinsic boundary geometry and a lower scalar-curvature bound. B\"ar recently proved the corresponding estimate for spin fill-ins, with the constant also depending on a lower bound for the boundary
mean curvature \cite{BaerTotal}. His argument can be combined with the Jang deformation used here to give a spacetime version in which the mean curvature is replaced by $\norm{\mathcal H}$. For pure-trace data $K=cg$, Theorem~\ref{thm:total} also recovers B\"ar's Riemannian total mean-curvature estimate in the untrapped regime.
\begin{theorem}\label{thm:total}
Let $3\leq n\leq7$ and let $(\Sigma^{n-1},\gamma)$ be a closed Riemannian spin manifold. For every connected component $\Sigma_j$ there is a constant
$C(\Sigma_j)$, depending only on the intrinsic Riemannian spin geometry of $\Sigma_j$, such that every untrapped spin boundary data set
$(\Sigma,\gamma,H,P)$ admitting a DEC spin fill-in satisfies
\begin{equation}\label{eq:total}
 \int_{\Sigma_j}\norm{\mathcal H}
 \leq C(\Sigma_j).
\end{equation}
\end{theorem}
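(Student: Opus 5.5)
Theorem~\ref{thm:total} will follow from a Riemannian total mean-curvature estimate on the Jang graph, in the spirit of B\"ar's argument \cite{BaerTotal}, adapted to complete manifolds with compact boundary.

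\emph{Step 1: Jang reduction.} As in the proof of Theorem~\ref{thm:initial-spectral}, I solve Jang's equation on $\Omega$ with Dirichlet data $f=0$ on $\Sigma$ for $3\le n\le 7$. I allow blow-up along apparent horizons, so the Jang graph may have cylindrical ends. This produces a complete spin manifold $(\hat M,\hat g)$ with compact boundary isometric to $(\Sigma,\gamma)$, with compatible spin structure. Its boundary mean curvature satisfies $\hat H\ge\norm{\mathcal H}>0$, after choosing boundary data or a boundary gradient so that the Jang boundary mean curvature equals $\sqrt{H^2-P^2}$. Its scalar curvature satisfies $R_{\hat g}\ge 2\norm{q}^2-2\diver X$ for a bounded vector field $X$, which is the usual Schoen--Yau identity. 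If needed, a conformal change or the Schoen--Yau conformal trick removes the divergence term in the weak spectral sense: the operator $-\Delta+\tfrac{n-2}{4(n-1)}R$ plus the divergence is nonnegative. Dirac-adapted versions exist, in the style of Bray--Khuri, using the twisted operator $D+\tfrac12 c(X)$.

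\emph{Step 2: Reilly--MIT estimate.} On $\hat M$ I apply the Reilly--MIT estimate for complete spin manifolds with compact boundary, announced in the abstract. It should read
\[
\int_{\partial\hat M}\Bigl(\tfrac12\hat H\norm{\varphi}^2-\ip{D^\Sigma\varphi}{\varphi}\Bigr)\le0
\]
for every boundary spinor $\varphi$ in the negative (APS-type) spectral subspace. The MIT-type harmonic extension must exist and lie in $L^2$ on the noncompact ends, with the Jang divergence term absorbed. B\"ar's method in \cite{BaerTotal} then plugs in a well-chosen finite family of test spinors built from eigenspinors of $D^{\Sigma_j}$. The idea is to choose spinors $\varphi_1,\dots,\varphi_N$ on $\Sigma_j$, extended by zero elsewhere, lying in the span of eigenspinors with eigenvalues up to some $\Lambda(\Sigma_j)$, such that $\sum_k\norm{\varphi_k}^2$ is pointwise bounded below by a positive constant $c(\Sigma_j)$. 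For instance, a local frame of eigenspinors approximates this via a Gårding/Weyl-type density argument. Summing the inequalities gives
\[
\tfrac12 c\int_{\Sigma_j}\hat H\le \Lambda\sum_k\Norm{\varphi_k}^2_{L^2}.
\]
Combined with $\hat H\ge\norm{\mathcal H}$, this yields \eqref{eq:total} with $C(\Sigma_j)=2\Lambda\sum_k\Norm{\varphi_k}^2/c$, which depends only on the intrinsic spin geometry of $\Sigma_j$.

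\emph{Main obstacle.} B\"ar's original constant depends on a lower bound for $H$, and the Jang-deformed problem must avoid this. Positivity $\hat H>0$ comes from untrappedness, which is what replaces that lower bound. The main difficulty is justifying the boundary inequality on the complete, possibly noncompact $\hat M$ with only the weak DEC-type scalar bound. That requires the MIT boundary problem to be Fredholm, with integration by parts valid on cylindrical ends, and the divergence term to be harmless. I would first try the twisted Dirac operator $D+\tfrac12 c(X)$, whose Lichnerowicz formula absorbs the $\diver X$ term exactly, together with exponential decay of harmonic spinors on the cylindrical ends. Any error term would be controlled by a Kato-type inequality.
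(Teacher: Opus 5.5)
Your overall architecture, a Jang reduction followed by a B\"ar-type spinorial argument on the complete graph, is the paper's. However, Step~2, which carries the whole estimate, does not work as stated.

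The inequality $\frac12\int\widehat H\norm{\varphi}^2\le\int\ip{D^\Sigma\varphi}{\varphi}$ is not available for arbitrary boundary spinors in a spectral subspace. The Reilly formula gives it only for the trace $\Psi|_\Sigma$ of a (nearly) harmonic spinor $\Psi$ on $\widehat M$. As literally written, with $\varphi$ in the negative spectral subspace, the right-hand side is $\le0$, so the inequality would force $\varphi=0$.

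With a nonlocal APS-type condition the trace is $\Psi|_\Sigma=\varphi+\eta$. Here $\eta$ lies in the complementary spectral subspace and is only $L^2$-orthogonal to $\varphi$, not pointwise orthogonal. The boundary term therefore controls $\int\widehat H\norm{\varphi+\eta}^2$, and your pointwise bound $\sum_k\norm{\varphi_k}^2\ge c$ gives no pointwise bound on $\sum_k\norm{\Psi_k}^2$. What you can extract is essentially only $\frac12\min\widehat H\le\Lambda$, an eigenvalue-type bound, not a bound on $\int\widehat H$.

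The missing idea is a \emph{local} boundary condition. Use the MIT condition $P^+\Psi_\alpha=P^+\varphi_\alpha$, with $P^\pm=\frac12(\Id\pm ic(\nu))$, and B\"ar's family normalized pointwise by $\sum_\alpha\norm{P^+\varphi_\alpha}^2=1$. Then:
\begin{itemize}
\item $\sum_\alpha\norm{\Psi_\alpha}^2\ge1$ pointwise on $\Sigma_j$;
\item because $\mathcal D^\Sigma$ interchanges $\mathbf S^\pm$, the boundary Dirac term equals $2\operatorname{Re}\int\ip{\mathcal D^\Sigma P^+\varphi_\alpha}{P^-\Psi_\alpha}$;
\item Green's formula gives $\Norm{P^-\Psi_\alpha}_{L^2(\Sigma)}\le\Norm{P^+\varphi_\alpha}_{L^2(\Sigma)}$.
\end{itemize}
Cauchy--Schwarz then yields the intrinsic constant $4\sum_\alpha\Norm{\mathcal D^{\Sigma_j}\varphi_\alpha}_{L^2}\Norm{\varphi_\alpha}_{L^2}$, with no eigenvalue cutoff or Weyl-type argument.

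Step~1 also contains an error that matters. With $u=0$ on $\Sigma$ one has $\widehat H=H/\sqrt{1+u_N^2}$. This is smaller than $\norm{\mathcal H}$ as soon as $P=0$ and $u_N\neq0$, and $u_N$ is dictated by the solution, not chosen. What holds is $\widehat H-q(\widehat N)=\sqrt{1+u_N^2}\,H-u_NP\ge\norm{\mathcal H}$.

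The term $q(\widehat N)$ is exactly the boundary contribution of $2\diver_{\widehat g}q$ in the Schoen--Yau identity. Any device that disposes of the divergence, whether conformal change or twisting, must account for it at the boundary, and you do not do this. The paper keeps the untwisted $D$ and adds $\frac12\int\norm{\nabla\psi+X^\flat\otimes\psi}^2$ to the Reilly formula. This replaces $R$ by $\mathcal E=R-2\norm X^2+2\diver X\ge0$ and $H$ by $\beta=H-h(X,N)$, which for $X=q^\sharp$ is precisely the quantity bounded below by $\norm{\mathcal H}$.

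Finally, you leave open the existence of the $L^2$ MIT extension on the noncompact graph. It cannot in general be obtained from exponential decay on cylindrical ends: the ends are only asymptotically cylindrical, and $D$ need not be Fredholm there. The paper instead solves $(D-i\varepsilon)\Psi=0$, which is coercive at infinity on every complete manifold, so B\"ar--Ballmann theory and Green's formula give unique solvability. A cutoff argument yields the Reilly identity with an extra term $-\varepsilon^2\Norm{\Psi}_{L^2(M)}^2$. The balance identity bounds this by $\frac\varepsilon2\Norm{\varphi}_{L^2(\Sigma)}^2$, which disappears as $\varepsilon\to0$.
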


In dimension three, suppose in addition that $(\Sigma,\gamma)$ is a topological sphere with positive Gauss curvature. Let $H_0>0$ denote the mean curvature of its convex isometric embedding into $\R^3$, which is unique up to rigid motions by the Weyl embedding theorem. The positivity of the Liu--Yau quasi-local mass \cite[Theorem~1]{LiuYau} gives
\[
\int_\Sigma\norm{\mathcal H}
\leq
\int_\Sigma H_0.
\]
Thus $C(\Sigma)=\int_\Sigma H_0$ is an explicit intrinsic choice in Theorem~\ref{thm:total}. Beyond this special case, Theorem~\ref{thm:total} yields such an intrinsic, though generally nonexplicit, bound for arbitrary untrapped spin boundaries of DEC fill-ins in dimensions $3\leq n\leq7$, without any assumption on their topology or intrinsic curvature.

The analytic part of the proof rests on the general framework developed by B\"ar and Ballmann for Dirac-type operators on complete manifolds with compact boundary \cite{BaerBallmann}. Indeed, for \(\varepsilon>0\), the imaginary shift $D-i\varepsilon$ is coercive at infinity, so their Fredholm theory applies to the shifted MIT boundary problem on the complete Jang graph; the corresponding Green identity then gives invertibility. This avoids both weighted Sobolev spaces and spectral assumptions on the ends. A cutoff argument subsequently extends the modified spinorial Reilly formula to the resulting spinors and allows us to pass to the limit as $\varepsilon$ tends to zero. Section~\ref{BoundaryEstimates} proves the general estimates for complete spin manifolds with compact boundary using this Reilly--MIT argument, while Section~\ref{sec:jang} applies it to the capillarity limit of Jang's equation.


\section{Boundary estimates on complete spin manifolds}\label{BoundaryEstimates}


Let $(M^n,h)$ be a connected complete spin manifold with nonempty compact smooth boundary
\[
\Sigma=\Sigma_1\sqcup\cdots\sqcup\Sigma_\ell,
\]
and let $X$ be a smooth vector field on $M$. Set
\[
	\mathcal E:=R_h-2\norm{X}_h^2+2\diver_hX
\]
and
\[
	\beta:=H-h(X,N),
\]
where $N$ is the outward unit normal and $H$ is the corresponding mean curvature.

The following theorem is the analytic core of the spectral part of the paper. It extends the compact estimate of \cite[Theorem~2]{Raulot} to complete manifolds with compact boundary, as required for the capillarity limit of Jang's equation. In particular, when $X=0$, it extends the boundary eigenvalue estimate of Hijazi--Montiel--Zhang \cite[Theorem~6]{HMZ} from compact spin manifolds with boundary to complete spin manifolds with compact boundary, without any assumption on the ends. A related $\Spin^c$ extension, which allows the boundary to be noncompact but assumes bounded geometry and coercivity at infinity, was obtained by Gro{\ss}e and Nakad
\cite[Theorem~1.2]{GrosseNakad}.

\begin{theorem}\label{thm:complete-spectral}
	Let $(M^n,h)$ be a connected complete spin manifold with nonempty compact smooth boundary \(\Sigma\) and let $X$ be a smooth vector field on $M$. Assume that $\mathcal E\geq0$ on $M$ and $\min_{\Sigma_j}\beta>0$ for every $j$. Then, for every connected component $\Sigma_j$, the intrinsic Dirac operator $D^{\Sigma_j}$ has trivial kernel and
	\begin{equation}\label{eq:complete-eigenvalue-estimate}
		\lambda(\Sigma_j)
		\geq
		\frac12\min_{\Sigma_j}\beta.
	\end{equation}
	If equality holds for some component, then $\Sigma$ is connected, $M$ is compact, $X\equiv0$, and $(M,h)$ carries a nonzero parallel spinor. Moreover, $\beta=H$ is constant on $\Sigma$ and $h$ is Ricci-flat. In particular, \eqref{eq:complete-eigenvalue-estimate} is strict whenever $M$ is noncompact.
\end{theorem}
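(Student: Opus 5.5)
We follow the Hijazi--Montiel--Zhang strategy, adapted to the modified connection $\nabla_Y\psi+\tfrac12 X\cdot Y\cdot\psi$ (or its equivalent form in \cite{Raulot}). The key tool is a modified spinorial Reilly formula. For a compactly supported or suitably decaying spinor field $\psi$ on $M$ with modified Dirac-type operator $\widetilde D$, the formula reads
\[
\int_M \Bigl(|\widetilde\nabla\psi|^2+\tfrac14\mathcal E|\psi|^2-\tfrac{n-1}{n}|\widetilde D\psi|^2\Bigr)\le \int_\Sigma\Bigl(\ip{D^\Sigma\psi}{\psi}-\tfrac12\beta|\psi|^2\Bigr),
\]
with the Penrose twistor part dropped where appropriate. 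This is the inequality used in \cite[Theorem~2]{Raulot}; the boundary term combines $H$ and $h(X,N)$ into $\beta$.

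Fix a component $\Sigma_j$ and an eigenspinor $\varphi$ of $D^{\Sigma_j}$ with eigenvalue $\lambda$, $|\lambda|=\lambda(\Sigma_j)$. After possibly composing with Clifford multiplication by $N$, we may take $\lambda\ge0$. We then solve a boundary value problem $\widetilde D\psi=0$ on $M$ with the MIT-bag condition $\psi=\varphi$ modulo the kernel of the projection $\tfrac12(\Id\pm iN\cdot)$. On a complete noncompact $M$ this is not directly Fredholm, so we proceed in steps:
\begin{enumerate}
\item Consider the shifted operator $\widetilde D-i\varepsilon$. It is coercive at infinity, since $|(\widetilde D-i\varepsilon)\psi|^2=|\widetilde D\psi|^2+\varepsilon^2|\psi|^2$ up to the cross term. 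That cross term is a boundary term by the symmetry of $\widetilde D$, provided $X$ enters with the correct (skew) structure; otherwise we absorb it.
\item Apply B\"ar--Ballmann to get Fredholmness of the MIT problem.
\item Use Green's formula, with the sign given by the MIT condition and the positivity of $\beta$, to show the kernel and cokernel vanish. This yields $\psi_\varepsilon\in H^1_{loc}\cap L^2$ with prescribed boundary data and $\widetilde D\psi_\varepsilon=i\varepsilon\psi_\varepsilon$.
\end{enumerate}

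Next we plug $\psi_\varepsilon$ into the Reilly formula using cutoffs $\chi_R$. The error terms are controlled by $\int|d\chi_R|\,|\psi_\varepsilon||\widetilde\nabla\psi_\varepsilon|$ and tend to zero because $\psi_\varepsilon\in L^2$ and $\widetilde\nabla\psi_\varepsilon\in L^2$, the latter following from the same formula with $\mathcal E\ge0$. We then have $|\widetilde D\psi_\varepsilon|^2=\varepsilon^2|\psi_\varepsilon|^2$. On the boundary, the MIT condition splits $\psi=\psi^++\psi^-$, where the $\psi^+$ part is given by $\varphi$. Since $D^\Sigma$ anticommutes with $iN\cdot$, we get $\int_\Sigma\ip{D^\Sigma\psi}{\psi}=2\,\mathrm{Re}\int\ip{D^\Sigma\psi^+}{\psi^-}$. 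This is handled exactly as in \cite{HMZ,Raulot}, giving
\[
0\le\int_\Sigma\Bigl(\lambda-\tfrac12\beta\Bigr)|\varphi|^2+\text{terms}\to0.
\]
The $\varepsilon^2$ term has the favorable sign or is bounded by $\varepsilon^2\|\psi_\varepsilon\|^2$. We therefore need uniform $L^2$ bounds on $\psi_\varepsilon$, or else we use the fact that the Reilly inequality gives $\int|\widetilde\nabla\psi_\varepsilon|^2$ bounded independently of $\varepsilon$, which makes the term harmless in the limit. This yields $\lambda\ge\frac12\min\beta>0$ whenever $\varphi\ne0$, so the kernel of $D^{\Sigma_j}$ is trivial and \eqref{eq:complete-eigenvalue-estimate} holds. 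The boundary data can be placed on $\Sigma_j$ alone, with $\psi^+=0$ on the other components.

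For equality, we pass to the limit $\varepsilon\to0$ using local elliptic estimates and the uniform gradient bound, obtaining $\psi$ with $\widetilde\nabla\psi=0$ and $\mathcal E|\psi|^2=0$. A modified-parallel spinor has constant length after computing $d|\psi|^2=-\ip{X\cdot Y\cdot\psi}{\psi}$-type terms, from which one shows $X=0$ via the real part vanishing and the equality in the boundary term. The spinor $\psi$ is then parallel and nonzero, since its boundary value on $\Sigma_j$ is the nonzero $\varphi$. Hence $|\psi|$ is constant and nonzero. Because $\psi\in L^2$, this forces $\mathrm{vol}(M)<\infty$, and completeness together with parallelism implies $M$ is compact; in particular the estimate is strict whenever $M$ is noncompact. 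Having a parallel spinor makes $h$ Ricci-flat. The boundary equality $D^\Sigma\psi=\frac{H}{2}\psi$-type relation, restricted to components other than $\Sigma_j$ where $\psi^+=0$, gives a contradiction unless $\Sigma$ is connected, and it also gives $\beta=H$ constant.

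The main obstacle is the uniform control as $\varepsilon\to0$: showing that $\psi_\varepsilon$ converges to a nontrivial limit in the equality case without weighted spaces. We would first try to use the Reilly identity itself to bound $\|\widetilde\nabla\psi_\varepsilon\|_{L^2}$ and the boundary trace uniformly, and then rely only on local convergence.
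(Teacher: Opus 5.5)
Your architecture (imaginary shift, B\"ar--Ballmann Fredholm theory, cutoff extension of the Reilly formula, MIT splitting of the boundary term, local limit in the equality case) is the paper's. However, the step you flag as the main obstacle is left open, and neither proposed fix works. For $(D-i\varepsilon)\psi_\varepsilon=0$ the Dirac term enters the Reilly identity as $-\varepsilon^2\Norm{\psi_\varepsilon}_{L^2(M)}^2$, with the unfavourable sign, so you must show $\varepsilon^2\Norm{\psi_\varepsilon}_{L^2(M)}^2\to0$. A uniform $L^2(M)$ bound is not available in this generality. A bound on $\Norm{\nabla\psi_\varepsilon}_{L^2}$ does not control the global $L^2$ norm, and cannot be extracted from the Reilly identity without first controlling this very term.

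The missing idea is to apply Green's formula to $\psi_\varepsilon$ itself, not only to kernel elements. With $\Phi_\pm=P^\pm\varphi$ and the datum $\Phi_+$ placed on $\Sigma_j$, take imaginary parts in \eqref{eq:Green} with $\eta=\psi=\psi_\varepsilon$:
\[
\Norm{P^-\psi_\varepsilon}_{L^2(\Sigma)}^2=\Norm{\Phi_+}_{L^2(\Sigma_j)}^2-2\varepsilon\Norm{\psi_\varepsilon}_{L^2(M)}^2,
\qquad\text{hence}\qquad
\varepsilon^2\Norm{\psi_\varepsilon}_{L^2(M)}^2\leq\frac{\varepsilon}{2}\Norm{\Phi_+}_{L^2(\Sigma_j)}^2,
\]
whatever the ends look like. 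Write $a=\Norm{\Phi_+}_{L^2(\Sigma_j)}^2$, $b_j=\min_{\Sigma_j}\beta$, and bound the boundary Dirac term by $2\lambda\operatorname{Re}\int_{\Sigma_j}\ip{\Phi_-}{P^-\psi_\varepsilon}\leq\lambda\bigl(a+\Norm{P^-\psi_\varepsilon}_{L^2(\Sigma_j)}^2\bigr)$. This gives $0\leq\bigl(\lambda-\frac{b_j}{2}\bigr)\bigl(a+\Norm{P^-\psi_\varepsilon}_{L^2(\Sigma_j)}^2\bigr)+\frac{\varepsilon}{2}a$, hence $\lambda\geq\frac{b_j}{2}-\frac{\varepsilon}{2}$. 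In the equality case the same identity gives $\Norm{\nabla\psi_\varepsilon}_{L^2(M)}^2\leq\varepsilon a$. It also forces the retained terms $\int_{\Sigma_k}\beta\norm{\psi_\varepsilon}^2$, $k\neq j$, to vanish in the limit, which is how connectedness of $\Sigma$ follows. For the kernel statement, note that $P^+\varphi$ may vanish for $\varphi\in\ker D^{\Sigma_j}$, and Clifford multiplication by $N$ preserves $\mathbf S^\pm$. In that case you need the $D+i\varepsilon$ problem with prescribed $P^-$ trace.

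The equality case has two further problems. First, $\psi_0$ is only an $H^1_{\mathrm{loc}}$ limit, and the balance identity gives merely $\Norm{\psi_\varepsilon}_{L^2(M)}^2\leq a/(2\varepsilon)$. So you cannot assert $\psi_0\in L^2(M)$; deducing finite volume, and then compactness, from it is circular, and finite volume alone does not make a complete manifold compact. The paper instead invokes Kasue's inradius estimate: $h$ is Ricci-flat and $H=b_j>0$ on the compact boundary, so the distance to $\Sigma$ is bounded and $M$ is compact.

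Second, for the connection $\nabla_Y+\frac12X\cdot Y\cdot$ you wrote down, a parallel spinor satisfies $\dd\norm{\psi}^2=\norm{\psi}^2X^\flat$. It therefore does not have constant length, and $X=0$ does not follow. Moreover, the associated $\widetilde D=D+\frac{n-2}{2}X\cdot$ is not formally self-adjoint. Your coercivity cross term is of order $\varepsilon\int\norm{X}\norm{\psi}^2$, and Green's formula acquires an extra bulk term $(n-2)\int\ip{X\cdot\psi}{\psi}$; neither can in general be absorbed for small $\varepsilon$. Keep the standard $D$ in the boundary problem, and let $X$ enter only through the Reilly identity containing both $\frac12\norm{\nabla\psi}^2$ and $\frac12\norm{\nabla\psi+X^\flat\otimes\psi}^2$. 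The small-energy estimate then yields $\nabla\psi_0=0$ and $X^\flat\otimes\psi_0=0$ directly, so $\norm{\psi_0}$ is a positive constant and $X\equiv0$.
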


Combining the case $X=0$ of Theorem~\ref{thm:complete-spectral} with B\"ar's hyperspherical-radius estimate \eqref{eq:Baer-radius} gives the following complete-manifold
extension of Gromov's fill-in inequality.

\begin{corollary}\label{cor:complete-radius}
	Let $(M^n,h)$ be a connected complete spin manifold with nonempty compact smooth boundary \(\Sigma\). Assume that $R_h\geq0$ on $M$ and $H>0$ on $\Sigma$. Then every connected component $\Sigma_j$ satisfies
	\[
	\min_{\Sigma_j}H
	\leq
	\frac{n-1}{\Rad(\Sigma_j)}.
	\]
	Moreover, the inequality is strict if $M$ is noncompact.
\end{corollary}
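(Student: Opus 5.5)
This follows by applying Theorem~\ref{thm:complete-spectral} with $X=0$ and then invoking B\"ar's estimate \eqref{eq:Baer-radius} on each component.

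\emph{Step 1: the hypotheses of Theorem~\ref{thm:complete-spectral}.} With $X=0$ we have $\mathcal E=R_h\geq0$ on $M$ and $\beta=H$. Since $\Sigma$ is compact and $H>0$, we get $\min_{\Sigma_j}\beta=\min_{\Sigma_j}H>0$ for every $j$.

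\emph{Step 2: the spectral bound.} Theorem~\ref{thm:complete-spectral} now gives, for each component,
\[
\frac12\min_{\Sigma_j}H\leq\lambda(\Sigma_j),
\]
with strict inequality whenever $M$ is noncompact.

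\emph{Step 3: the hyperspherical radius.} Combining Step~2 with \eqref{eq:Baer-radius} gives
\[
\frac12\min_{\Sigma_j}H\leq\lambda(\Sigma_j)\leq\frac{n-1}{2\Rad(\Sigma_j)}.
\]
Multiplying by $2$ yields the claimed inequality. If $M$ is noncompact, the first inequality is strict, and so is the final one.

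The only point needing care is the degenerate case $\Rad(\Sigma_j)=\infty$. If $\Rad(\Sigma_j)=\infty$, then \eqref{eq:Baer-radius} would force $\lambda(\Sigma_j)=0$. But Theorem~\ref{thm:complete-spectral} asserts that $D^{\Sigma_j}$ has trivial kernel, so $\lambda(\Sigma_j)>0$ on the closed manifold $\Sigma_j$, since the spectrum there is discrete. Hence $\Rad(\Sigma_j)<\infty$. The case $\Rad(\Sigma_j)=0$, meaning no nonzero-degree Lipschitz map exists, makes the right-hand side infinite, so the inequality holds trivially.
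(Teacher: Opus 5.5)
Your proof is correct and is exactly the paper's argument: set $X=0$ in Theorem~\ref{thm:complete-spectral}, so that $\mathcal E=R_h$ and $\beta=H$, use its strictness for noncompact $M$, and chain the result with B\"ar's bound \eqref{eq:Baer-radius}. Your treatment of degenerate values of $\Rad(\Sigma_j)$ is harmless but unnecessary, since on a closed manifold one always has $0<\Rad(\Sigma_j)<\infty$: degree-one collapse maps exist, and a $1$-Lipschitz map of nonzero degree is surjective, which forces $\pi r\leq\operatorname{diam}\Sigma_j$.
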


The same analytic method also gives the following total modified mean-curvature estimate.
\begin{theorem}\label{thm:complete-total}
	Let $(M^n,h)$ be a connected complete spin manifold with nonempty compact smooth boundary \(\Sigma\) and let $X$ be a smooth vector field on $M$. Assume that $\mathcal E\geq0$ on $M$ and $\beta\geq0$ on $\Sigma$. Then, for every connected component $\Sigma_j$, there exists a constant $C(\Sigma_j)$, depending only on its intrinsic Riemannian spin geometry, such that
	\[
		\int_{\Sigma_j}\beta
		\leq
		C(\Sigma_j).
	\]
\end{theorem}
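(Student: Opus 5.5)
We follow Bär's argument in \cite{BaerTotal}, carried out on the complete manifold $M$ with the Reilly--MIT machinery used for Theorem~\ref{thm:complete-spectral}. The aim is a boundary spinor $\varphi$ on $\Sigma_j$ with $\norm\varphi\equiv1$ and $\int_{\Sigma_j}\norm{D^{\Sigma_j}\varphi}$ controlled intrinsically, and a fill-in inequality that bounds $\int_{\Sigma_j}\beta$ by this quantity.

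\textbf{Step 1: the modified spinorial Reilly inequality.} With $\mathcal E\geq0$, the Reilly formula twisted by $X$, as in \cite{Raulot}, reads
\[
\int_M\Bigl(\norm{\nabla^X\psi}^2-\tfrac{n-1}{n}\norm{D^X\psi}^2\Bigr)+\int_\Sigma\Bigl(\tfrac12\beta\norm\psi^2-\ip{D^\Sigma\psi}{\psi}\Bigr)\leq0
\]
up to sign conventions. It is first proved for compactly supported $\psi$ and then, by the cutoff argument of the previous theorem, for $\psi$ in the relevant domain.

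\textbf{Step 2: a harmonic extension.} Given a boundary spinor $\varphi$ on $\Sigma_j$, solve $D^X\psi=0$ on $M$, with $\psi=\varphi$ on $\Sigma_j$ in the appropriate APS or MIT-adapted sense and $\psi=0$ on the other components. As in Theorem~\ref{thm:complete-spectral}, first treat the shifted operator $D^X-i\varepsilon$, which is coercive at infinity. Bär--Ballmann Fredholm theory and the Green identity give invertibility. Uniform estimates from Step 1 then let us pass to the limit $\varepsilon\to0$ locally.

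\textbf{Step 3: Bär's pointwise trick.} Following \cite{BaerTotal}, choose $\varphi$ to be an intrinsically defined spinor field on $\Sigma_j$ of pointwise unit length, or a finite family $\varphi_1,\dots,\varphi_k$ with $\sum_a\norm{\varphi_a}^2\equiv1$, for instance a trivializing frame of the spinor bundle obtained from an embedding of $\Sigma_j$. The boundary term in Step 1 then gives
\[
\tfrac12\int_{\Sigma_j}\beta\leq\sum_a\int_{\Sigma_j}\ip{D^{\Sigma_j}\varphi_a}{\varphi_a}+(\text{correction from }\psi-\varphi).
\]
The correction is controlled by the boundary value of the harmonic extension: the part of $\varphi$ in the negative APS spectral subspace, which $\psi$ cannot match, is bounded through the spectral projection of $D^{\Sigma_j}$. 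Bär handles this using a lower bound on the mean curvature; here $\beta\geq0$ plays that role, because it makes the interior terms in Step 1 have a favourable sign. All resulting quantities depend only on $(\Sigma_j,\gamma)$ and its spin structure, and this gives $C(\Sigma_j)$.

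\textbf{Main obstacle.} The hard part is Step 2 together with the correction term in Step 3. Unlike Theorem~\ref{thm:complete-spectral}, we no longer have $\min\beta>0$, so the MIT problem does not give coercivity from the boundary. Uniform-in-$\varepsilon$ control of $\psi$, and of the mismatch between its boundary value and $\varphi$, must come from the APS-type boundary condition. I would first try Bär's splitting of $\varphi$ into positive and negative spectral parts, applying Step 1 separately to each. The degenerate case where $\beta$ vanishes somewhere would be handled by perturbing $X$ slightly near $\Sigma$ and taking a limit.
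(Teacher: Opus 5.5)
\textbf{There is a genuine gap.} Your outline has the right ingredients: the $X$-modified Reilly formula, the shift $D-i\varepsilon$ for coercivity at infinity, and B\"ar's family of boundary spinors. But the estimate that actually produces the bound is missing. You leave the ``correction'' term uncontrolled and name it as the main obstacle.

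\textbf{The missing idea} is the balance identity for the \emph{local} MIT problem. Prescribe $P^+\Psi_{\varepsilon,\alpha}=P^+\varphi_\alpha$ on $\Sigma_j$ and $P^+\Psi_{\varepsilon,\alpha}=0$ on the other components. (You cannot impose $\psi=0$ there; that is overdetermined for a first-order operator.) Then take the imaginary part of Green's formula for $D\Psi_{\varepsilon,\alpha}=i\varepsilon\Psi_{\varepsilon,\alpha}$:
\[
\Norm{P^-\Psi_{\varepsilon,\alpha}}_{L^2(\Sigma)}^2
=\Norm{P^+\varphi_\alpha}_{L^2(\Sigma)}^2
-2\varepsilon\Norm{\Psi_{\varepsilon,\alpha}}_{L^2(M)}^2 .
\]
This one identity controls two things:
\begin{itemize}
\item the unprescribed half of the trace, $\Norm{P^-\Psi_{\varepsilon,\alpha}}_{L^2(\Sigma)}\leq\Norm{\varphi_\alpha}_{L^2(\Sigma_j)}$;
\item the shift error, $\varepsilon^2\Norm{\Psi_{\varepsilon,\alpha}}_{L^2(M)}^2\leq\frac\varepsilon2\Norm{P^+\varphi_\alpha}_{L^2(\Sigma_j)}^2$.
\end{itemize}
Because $\mathcal D^\Sigma$ interchanges $\mathbf S^\pm$, the boundary Dirac term equals $2\operatorname{Re}\int_{\Sigma_j}\ip{\mathcal D^{\Sigma_j}P^+\varphi_\alpha}{P^-\Psi_{\varepsilon,\alpha}}$. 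Cauchy--Schwarz bounds it by $2\Norm{\mathcal D^{\Sigma_j}\varphi_\alpha}\Norm{\varphi_\alpha}$. Now use that the bulk terms are nonnegative, that $\beta\geq0$, that $\norm{\Psi_{\varepsilon,\alpha}}^2\geq\norm{P^+\varphi_\alpha}^2$ pointwise, and that $\sum_\alpha\norm{P^+\varphi_\alpha}^2=1$ on $\Sigma_j$. For each fixed $\varepsilon$ this gives
\[
\frac12\int_{\Sigma_j}\beta
\leq2\sum_\alpha\Norm{\mathcal D^{\Sigma_j}\varphi_\alpha}_{L^2(\Sigma_j)}\Norm{\varphi_\alpha}_{L^2(\Sigma_j)}
+\frac\varepsilon2\vol(\Sigma_j).
\]
The limit $\varepsilon\to0$ is taken only in this numerical inequality, so no limiting spinor is needed. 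Incidentally, the same estimates bound $\Norm{\nabla\Psi_{\varepsilon,\alpha}}_{L^2(M)}$ uniformly in $\varepsilon$. So the loss of control you expect from a possibly vanishing $\beta$ does not arise.

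\textbf{Parts of your plan that would fail or are unnecessary:}
\begin{itemize}
\item An APS condition is nonlocal. It gives no pointwise lower bound for $\norm{\psi}^2$ on $\Sigma_j$, so B\"ar's normalization trick can no longer turn $\int\beta\norm{\psi}^2$ into $\int\beta$. Locality of the MIT condition is essential. Correspondingly, the normalization must be imposed on $P^+\varphi_\alpha$, not on $\varphi_\alpha$.
\item Perturbing $X$ near $\Sigma$ is unnecessary. It also changes $\norm{X}^2$ and $\diver X$, so it may destroy $\mathcal E\geq0$.
\item The sign of the interior terms comes from $\mathcal E\geq0$, not from $\beta\geq0$. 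The condition $\beta\geq0$ is used only on the boundary: to discard $\frac\beta2\norm{P^-\Psi_{\varepsilon,\alpha}}^2$ on $\Sigma_j$, and to discard the whole contribution of the other components.
\item No twisted $D^X$ is needed. Keeping the untwisted, formally self-adjoint $D$ is what makes the balance identity exact. The field $X$ enters only through $\norm{\nabla\psi+X^\flat\otimes\psi}^2$ in the Reilly formula.
\end{itemize}
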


Taking $X=0$ in Theorem~\ref{thm:complete-total} gives the following complete-manifold extension of the case $R_h\geq0$ and $H\geq0$ of
\cite[Theorem~1]{BaerTotal}.
\begin{corollary}\label{cor:complete-total}
	Let $(M^n,h)$ be a connected complete spin manifold with nonempty compact smooth boundary \(\Sigma\). Assume that $R_h\geq0$ on $M$ and $H\geq0$ on $\Sigma$.
	Then, for every connected component $\Sigma_j$, there exists a constant $C(\Sigma_j)$, depending only on its intrinsic Riemannian spin geometry, such that
	\[
	\int_{\Sigma_j}H
	\leq
	C(\Sigma_j).
	\]
\end{corollary}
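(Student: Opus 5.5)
The corollary is the case $X=0$ of Theorem~\ref{thm:complete-total}, so the plan is to check its hypotheses and read off the conclusion.

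With $X\equiv0$ the two quantities of Section~\ref{BoundaryEstimates} become
\[
\mathcal E=R_h-2\norm{0}_h^2+2\diver_h 0=R_h,
\qquad
\beta=H-h(0,N)=H .
\]
The assumptions of Theorem~\ref{thm:complete-total} are then checked directly. The manifold $(M^n,h)$ is connected, complete and spin, with nonempty compact smooth boundary, exactly as required. The zero field is smooth. The condition $\mathcal E\geq0$ is the hypothesis $R_h\geq0$, and the condition $\beta\geq0$ on $\Sigma$ is the hypothesis $H\geq0$.

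Theorem~\ref{thm:complete-total} then gives, for each component $\Sigma_j$, a constant $C(\Sigma_j)$ depending only on the intrinsic Riemannian spin geometry of $\Sigma_j$, with
\[
\int_{\Sigma_j}H=\int_{\Sigma_j}\beta\leq C(\Sigma_j).
\]
This is the claimed estimate.

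The only point needing care is that the constant does not depend on $X$ or on any structure of $M$. Theorem~\ref{thm:complete-total} states exactly this, so no extra argument is required. No step here is a genuine obstacle: all the analytic work (Fredholm theory for the shifted MIT problem and the cutoff Reilly argument) is already contained in Theorem~\ref{thm:complete-total}.
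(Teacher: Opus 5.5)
Your proposal is correct and matches the paper's argument. The paper also obtains this corollary by setting $X=0$ in Theorem~\ref{thm:complete-total}. Then $\mathcal E=R_h$ and $\beta=H$, and the constant $C(\Sigma_j)$ depends only on the intrinsic spin geometry of $\Sigma_j$.
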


The proofs of Theorems~\ref{thm:complete-spectral} and \ref{thm:complete-total} rely on the same analytic ingredients. We first solve the shifted MIT boundary problem and establish the corresponding modified Reilly formula. The spectral and total estimates then follow from two different choices of boundary spinors.

\subsection{The shifted MIT boundary problem}

Let $\SM$ denote the complex spinor bundle of $(M,h)$. Its Hermitian product, taken to be linear in the first argument, spin connection, Clifford multiplication, and Dirac operator are denoted by $\langle\cdot,\cdot\rangle$, $\nabla$, $c$, and $D$, respectively. Along $\Sigma$, we work with the restricted bundle $\mathbf S:=\SM|_\Sigma$. Let $\mathcal D^\Sigma$ denote the adapted Dirac operator acting on sections of $\mathbf S$. Under the standard identifications, it is the intrinsic Dirac operator $D^\Sigma$ when $n-1$ is even, and $D^\Sigma\oplus(-D^\Sigma)$ when $n-1$ is odd. Consequently, its spectrum is symmetric in either case and, for every connected component $\Sigma_j$,
\[
\lambda(\Sigma_j)=\min\bigl\{|\lambda|:
\lambda\in\operatorname{Spec}(\mathcal D^{\Sigma_j})\bigr\}.
\]
Consider the fiberwise self-adjoint involution $s:=ic(\nu)$ where $\nu=-N$ is the inward unit normal and the associated orthogonal projections
\[
	P^\pm=\frac12(\Id\pm s).
\]
We write $\mathbf S^\pm:=\operatorname{im}P^\pm$. Since $s$ anticommutes with $\mathcal D^\Sigma$, the operator $\mathcal D^\Sigma$ interchanges $\mathbf S^+$ and $\mathbf S^-$; equivalently,
\begin{equation}\label{eq:swap}
	\mathcal D^\Sigma P^\pm=P^\mp\mathcal D^\Sigma.
\end{equation} 
Following B\"ar and Ballmann \cite{BaerBallmann}, we introduce the maximal domain
\[
\operatorname{dom}D_{\max}
=
\left\{
\psi\in L^2(M,\SM):
D\psi\in L^2(M,\SM)
\text{ in the distributional sense}
\right\}
\]
and the space
\[
H_D^1(M,\SM)
:=
\operatorname{dom}D_{\max}
\cap H^1_{\mathrm{loc}}(M,\SM),
\]
where the local regularity is understood up to the boundary. If $\chi\in C_c^\infty(M)$ is equal to one on a neighborhood of $\Sigma$, this space is endowed with the norm
\[
\Norm{\psi}_{H_D^1}^2
:=
\Norm{\chi\psi}_{H^1(M)}^2
+\Norm{\psi}_{L^2(M)}^2
+\Norm{D\psi}_{L^2(M)}^2.
\]
Different choices of $\chi$ give equivalent norms. Since $(M,h)$ is complete, the spin Dirac operator and its formal adjoint are complete in the sense of B\"ar and Ballmann by \cite[Theorem~1.2]{BaerBallmann}. By \cite[Theorem~6.7]{BaerBallmann}, every $\psi\in H_D^1(M,\SM)$ has a well-defined trace $\psi|_\Sigma\in H^{1/2}(\Sigma,\mathbf S)$. To lighten notation, we shall not distinguish between a spinor and its boundary trace in expressions over $\Sigma$. Green's formula then reads
\begin{equation}\label{eq:Green}
	\int_M
	\bigl(
	\langle D\psi,\eta\rangle
	-
	\langle\psi,D\eta\rangle
	\bigr)
	=
	-\int_\Sigma
	\langle c(\nu)\psi,\eta\rangle
\end{equation}
for all $\psi,\eta\in H_D^1(M,\SM)$. Taking $\eta=\psi$ in \eqref{eq:Green} and using $s=ic(\nu)$ gives
\begin{equation}\label{eq:Green-imaginary}
	2\operatorname{Im}
	\int_M\langle D\psi,\psi\rangle
	=
	\Norm{P^+\psi}_{L^2(\Sigma)}^2
	-
	\Norm{P^-\psi}_{L^2(\Sigma)}^2.
\end{equation}

We first consider the MIT boundary problem for the shifted operator $D-i\varepsilon$. Although the same problem appears in \cite{BaerTotal}, the shift plays a different role here. In B\"ar's compact setting, it is calibrated to the scalar-curvature lower bound and contributes to the final estimate. In the present complete, possibly noncompact setting, it is instead an auxiliary analytic parameter which guarantees coercivity at infinity. Its value is unrelated to the geometric data, and it will eventually be allowed to tend to zero.

Set
\[
B^\pm:=H^{1/2}(\Sigma,\mathbf S^\pm)
\]
and, following the notation of \cite[Remark~8.1]{BaerBallmann}, write
\[
H_D^1(M,\SM;B^\pm)
:=
\left\{
\psi\in H_D^1(M,\SM):
 \psi|_\Sigma\in B^\pm
\right\}.
\]
Thus $H_D^1(M,\SM;B^-)$ is the domain corresponding to the homogeneous condition $P^+(\psi|_\Sigma)=0$, whereas $H_D^1(M,\SM;B^+)$ corresponds to $P^-(\psi|_\Sigma)=0$.

The following proposition extends \cite[Proposition~1]{BaerTotal} to complete manifolds and records the identity used in \cite[Lemma~1]{BaerTotal}.
\begin{proposition}
	For every $\varepsilon>0$ and $\varphi\in H^{1/2}(\Sigma,\mathbf S^+)$, there exists a unique $\Psi_\varepsilon\in H_D^1(M,\SM)$ such that
	\begin{equation}\label{eq:MIT-problem}
		(D-i\varepsilon)\Psi_\varepsilon=0,
		\qquad
		P^+(\Psi_\varepsilon|_\Sigma)=\varphi.
	\end{equation}
	It is smooth when $\varphi$ is smooth and satisfies
	\begin{equation}\label{eq:balance}
		\Norm{P^- \Psi_\varepsilon}_{L^2(\Sigma)}^2
		=
		\Norm{\varphi}_{L^2(\Sigma)}^2
		-
		2\varepsilon
		\Norm{\Psi_\varepsilon}_{L^2(M)}^2.
	\end{equation}
	The analogous statement holds for $D+i\varepsilon$ with prescribed $P^-$ trace and the two signs interchanged.
\end{proposition}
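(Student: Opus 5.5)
We reduce the inhomogeneous problem to a homogeneous one and then use the Fredholm theory of B\"ar and Ballmann, with the Green identity \eqref{eq:Green-imaginary} supplying both injectivity and the balance law.

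\textbf{Balance identity.} Suppose $\Psi_\varepsilon$ solves \eqref{eq:MIT-problem}. Then $D\Psi_\varepsilon=i\varepsilon\Psi_\varepsilon$, so
\[
\int_M\langle D\Psi_\varepsilon,\Psi_\varepsilon\rangle=i\varepsilon\Norm{\Psi_\varepsilon}_{L^2(M)}^2 .
\]
Inserting this into \eqref{eq:Green-imaginary} gives
\[
2\varepsilon\Norm{\Psi_\varepsilon}_{L^2(M)}^2=\Norm{\varphi}_{L^2(\Sigma)}^2-\Norm{P^-\Psi_\varepsilon}_{L^2(\Sigma)}^2 ,
\]
which is \eqref{eq:balance}.

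\textbf{Uniqueness.} The difference $\Psi$ of two solutions solves the homogeneous problem with $\varphi=0$. By \eqref{eq:balance},
\[
\Norm{P^-\Psi}_{L^2(\Sigma)}^2=-2\varepsilon\Norm{\Psi}_{L^2(M)}^2 .
\]
Both sides must therefore vanish, so $\Psi=0$.

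\textbf{Existence.} Extend $\varphi$ to a spinor $\tilde\varphi\in H^1_D(M,\SM)$ with compact support near $\Sigma$ and trace $\varphi$. This is possible by the $H^{1/2}$-extension property of the trace in \cite[Theorem~6.7]{BaerBallmann}, or concretely by a collar extension multiplied by a cutoff. We then look for $\Psi_\varepsilon=\tilde\varphi+\Phi$ with $\Phi\in H^1_D(M,\SM;B^-)$ and
\[
(D-i\varepsilon)\Phi=-(D-i\varepsilon)\tilde\varphi\in L^2 .
\]
It therefore suffices to show that
\[
L:=D-i\varepsilon:\;H^1_D(M,\SM;B^-)\to L^2(M,\SM)
\]
is bijective.

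The MIT condition $P^+\psi|_\Sigma=0$ is a local elliptic boundary condition (it is $\mathcal D^\Sigma$-elliptic by \eqref{eq:swap}), hence elliptic in the Bär–Ballmann sense. The operator $D-i\varepsilon$ is coercive at infinity, because for compactly supported $\psi$ vanishing near $\Sigma$,
\[
\Norm{(D-i\varepsilon)\psi}^2=\Norm{D\psi}^2+\varepsilon^2\Norm{\psi}^2 ,
\]
using that $D$ is formally self-adjoint with no boundary term. The same holds for the formal adjoint $D+i\varepsilon$. By \cite[Theorem~8.5]{BaerBallmann}, $L$ is Fredholm, and its cokernel is the kernel of the adjoint problem $D+i\varepsilon$ with the adjoint boundary condition. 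For MIT this adjoint condition is again $B^-$, since $c(\nu)B^-=B^+$ is the orthogonal complement of $B^-$ for the pairing in \eqref{eq:Green}.

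\textbf{Triviality of kernel and cokernel.} The kernel of $L$ is trivial by the uniqueness step. For the cokernel, take $\eta$ in the adjoint kernel. Applying \eqref{eq:Green-imaginary} to $\eta$ gives
\[
-2\varepsilon\Norm{\eta}_{L^2(M)}^2=-\Norm{P^-\eta}_{L^2(\Sigma)}^2 .
\]
Here $P^-\eta$ is the only boundary component, since $P^+\eta=0$. This identity alone does not force $\eta=0$, so we first try computing more carefully.

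The cleanest route is to show directly that $L$ has closed range and dense range. Closed range comes from Fredholmness. For density, suppose $\eta\in L^2$ is orthogonal to the range. Elliptic regularity puts $\eta$ in $H^1_D$, and $\eta$ satisfies the adjoint condition $P^+\eta=0$ (this is the step where the boundary-condition bookkeeping must be checked against the sign conventions in \eqref{eq:Green}). Then \eqref{eq:Green-imaginary} applied to $\eta$, with
\[
\operatorname{Im}\langle D\eta,\eta\rangle=-\varepsilon\Norm{\eta}_{L^2(M)}^2 ,
\]
yields
\[
-2\varepsilon\Norm{\eta}^2=-\Norm{P^-\eta}^2 .
\]
To close the argument we use the opposite boundary sign. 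A more careful check of adjointness shows that the adjoint domain for $B^-$ is $B^+$ relative to $c(\nu)$, that is, $P^-\eta=0$. With $P^-\eta=0$ the identity becomes $-2\varepsilon\Norm{\eta}^2=0$, so $\eta=0$.

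\textbf{Smoothness and the $D+i\varepsilon$ case.} If $\varphi$ is smooth, boundary regularity for elliptic boundary conditions \cite[Theorem~6.11]{BaerBallmann} together with interior elliptic regularity gives smoothness of $\Psi_\varepsilon$. The statement for $D+i\varepsilon$ follows by the same argument with $P^\pm$ interchanged, or by conjugating with $c(\nu)$.

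The main obstacle is this sign bookkeeping in identifying the adjoint boundary condition, since it is exactly what makes the cokernel vanish via \eqref{eq:Green-imaginary}.
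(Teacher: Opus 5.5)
Your overall route is the paper's. The Green identity \eqref{eq:Green-imaginary} gives the balance law and uniqueness. Coercivity of $D\mp i\varepsilon$ at infinity gives Fredholmness of $(D-i\varepsilon)_{B^-}$. Extending $\varphi$ reduces existence to bijectivity, and the Green identity applied to the adjoint problem kills the cokernel.

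There is a genuine gap, exactly at the step you flag as the main obstacle: the adjoint boundary condition is never correctly justified.

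\textbf{The false claim.} Your first claim, $c(\nu)B^-=B^+$, is false. $c(\nu)$ commutes with $s=ic(\nu)$; indeed $c(\nu)=\mp i$ on $\mathbf S^\pm$, so $c(\nu)B^-=B^-$. The false claim leads you to the adjoint condition $P^+\eta=0$. As you observe, \eqref{eq:Green-imaginary} then only gives $2\varepsilon\Norm{\eta}^2_{L^2(M)}=\Norm{P^-\eta}^2_{L^2(\Sigma)}$, which does not force $\eta=0$.

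\textbf{The unjustified correction.} Your later ``more careful check'' asserts the correct condition $P^-\eta=0$ but gives no reason. As written, the proposal states two incompatible adjoint conditions and justifies neither. The missing one-line argument is the one the paper uses. By \eqref{eq:Green}, $\eta$ lies in the adjoint domain iff $\int_\Sigma\ip{c(\nu)\psi}{\eta}=0$ for all $\psi$ with trace in $B^-$. Since $c(\nu)B^-=B^-$, the annihilator of $c(\nu)B^-$ in the boundary pairing is $B^+$. This means $P^-(\eta|_\Sigma)=0$, and the adjoint operator is $(D+i\varepsilon)_{B^+}$.

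\textbf{The closing computation.} It conflates the two conditions. For $\eta$ with $D\eta=-i\varepsilon\eta$ and only $P^-\eta=0$, \eqref{eq:Green-imaginary} gives $-2\varepsilon\Norm{\eta}^2_{L^2(M)}=\Norm{P^+\eta}^2_{L^2(\Sigma)}$, not $-2\varepsilon\Norm{\eta}^2_{L^2(M)}=0$. The conclusion $\eta=0$ still follows, because the two sides have opposite signs.

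With these repairs your argument coincides with the paper's proof. A minor point: ``conjugating with $c(\nu)$'' has no meaning in the interior, since $\nu$ is only defined along $\Sigma$. The $D+i\varepsilon$ case should simply be handled by repeating the argument with $P^\pm$ interchanged, which you also suggest.
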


\begin{proof}
    The operators $D\mp i\varepsilon$ have the same principal symbol, maximal domain, and $H_D^1$ space as $D$, with equivalent norms; in particular, they are complete.
	
	The spaces $B^\pm$ define local elliptic boundary conditions by \cite[Example~7.26]{BaerBallmann}. Since a zero-order shift does not change the principal symbol, they are also elliptic for $D\mp i\varepsilon$.
	
	For every $\psi\in C_{cc}^\infty(M,\SM)$, where the subscript $cc$ means that the compact support is contained in the interior of $M$, formal self-adjointness of $D$ gives
	\[
		\Norm{(D\mp i\varepsilon)\psi}_{L^2(M)}^2
		=
		\Norm{D\psi}_{L^2(M)}^2
		+
		\varepsilon^2\Norm{\psi}_{L^2(M)}^2.
	\]
	Hence $D-i\varepsilon$ and its formal adjoint $D+i\varepsilon$ are coercive at infinity. Corollary~8.6 of \cite{BaerBallmann} therefore shows that
	\[
	\mathcal A_{\varepsilon,+}
	:=
	(D-i\varepsilon)_{B^-}
	:
	H_D^1(M,\SM;B^-)
	\longrightarrow L^2(M,\SM)
	\]
	is Fredholm.
	
	Since $B^\pm\subset H^{1/2}(\Sigma,\mathbf S)$, the maximal and $H_D^1$ realizations coincide by \cite[Lemma~7.3]{BaerBallmann}. The description of adjoint boundary
	conditions in \cite[Section~7.2]{BaerBallmann} gives
	\[
	\mathcal A_{\varepsilon,+}^*
	=
	(D+i\varepsilon)_{B^+}.
	\]
	Indeed, $c(\nu)$ preserves $\mathbf S^-$, so the annihilator of $c(\nu)B^-$ in the boundary pairing is $B^+$. Equivalently, the adjoint boundary condition is $P^-(\eta|_\Sigma)=0$.
	
	Let $\psi\in\ker\mathcal A_{\varepsilon,+}$. Then
	\[
	D\psi=i\varepsilon\psi,
	\qquad
	P^+(\psi|_\Sigma)=0.
	\]
	Since $\psi\in H_D^1(M,\SM)$, we may apply \eqref{eq:Green-imaginary} and obtain
	\[
	2\varepsilon\Norm{\psi}_{L^2(M)}^2
	=
	-\Norm{P^-\psi}_{L^2(\Sigma)}^2.
	\]
	Thus $\psi=0$. The same argument applied to $D+i\varepsilon$ with the homogeneous $P^-$ boundary condition shows that $\ker\mathcal A_{\varepsilon,+}^*=\{0\}$. Hence the Fredholm operator $\mathcal A_{\varepsilon,+}$ has trivial kernel and cokernel, and is therefore an isomorphism.
	
	Given $\varphi\in H^{1/2}(\Sigma,\mathbf S^+)$, choose an $H^1$ extension $\widetilde\varphi$ of $\varphi$, supported in a compact collar of $\Sigma$. Since
	$(D-i\varepsilon)\widetilde\varphi\in L^2(M,\SM)$, there is a unique $u\in H_D^1(M,\SM;B^-)$ satisfying
	\[
	(D-i\varepsilon)u
	=
	-(D-i\varepsilon)\widetilde\varphi.
	\]
	Then
	\[
	\Psi_\varepsilon=u+\widetilde\varphi
	\]
	is the unique solution of \eqref{eq:MIT-problem}. If $\varphi$ is smooth, smoothness up to the boundary follows from \cite[Theorem~7.17 and Proposition~7.24]{BaerBallmann}, together with interior elliptic regularity.
	
	Finally, applying \eqref{eq:Green-imaginary} to $\Psi_\varepsilon$ gives \eqref{eq:balance}. The proof for $D+i\varepsilon$ with prescribed $P^-$ trace is identical, with the signs interchanged.
\end{proof}

\subsection{The modified Reilly identity}

For every smooth spinor compactly supported up to the boundary, the integrated Schr\"odinger--Lichnerowicz formula and the computation in \cite[Proposition~3]{Raulot} give
\begin{align}\label{eq:compact-Reilly}
 \mathcal B_\Sigma(\psi)
 =\frac12\int_M\norm{\nabla\psi}^2
  +\frac12\int_M
     \norm{\nabla\psi+X^\flat\otimes\psi}^2
  +\frac14\int_M\mathcal E\norm\psi^2
  -\int_M\norm{D\psi}^2,
\end{align}
where we let 
\begin{align*}
	\mathcal B_\Sigma(\psi)
	:=\operatorname{Re}\int_\Sigma
	\left(\ip{\mathcal D^\Sigma\psi}{\psi}
	-\frac\beta2\norm\psi^2\right).
\end{align*}
We use the same notation for the continuous extension of $\mathcal B_\Sigma$ to $H_D^1(M,\SM)$, with the boundary Dirac term understood by Sobolev duality.

The next result shows that the preceding identity remains valid for solutions of the shifted MIT problem on the complete manifold $M$. No additional geometric or topological assumption on the ends is needed.
\begin{proposition}
	Assume that $\mathcal E\geq0$. For every $\varepsilon>0$ and
	$\varphi\in H^{1/2}(\Sigma,\mathbf S^+)$, the solution
	$\Psi_\varepsilon$ of \eqref{eq:MIT-problem} belongs to
	$H^1(M,\SM)$ and satisfies
	\begin{align}\label{eq:global-Reilly}
		\mathcal B_\Sigma(\Psi_\varepsilon)
		&=\frac12\int_M\norm{\nabla\Psi_\varepsilon}^2
		+\frac12\int_M
		\norm{\nabla\Psi_\varepsilon
			+X^\flat\otimes\Psi_\varepsilon}^2
		+\frac14\int_M\mathcal E\norm{\Psi_\varepsilon}^2
		-\varepsilon^2\int_M\norm{\Psi_\varepsilon}^2.
	\end{align}
	The same conclusion holds for the analogous $D+i\varepsilon$ problem with prescribed $P^-$ trace.
\end{proposition}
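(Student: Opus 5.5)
We plan to apply the compact identity \eqref{eq:compact-Reilly} to cut-off spinors $\psi_k:=\chi_k\Psi_\varepsilon$ and let $k\to\infty$. Fix a point $o\in M$ and cut-off functions $\chi_k\in C_c^\infty(M)$ with $\chi_k=1$ on the metric ball $B(o,k)$ (chosen to contain $\Sigma$), $\chi_k=0$ outside $B(o,2k)$, $0\le\chi_k\le1$ and $\norm{\nabla\chi_k}\le 2/k$. Such functions exist because $h$ is complete. By the previous proposition we may first take $\varphi$ smooth, so that $\Psi_\varepsilon$ is smooth up to the boundary. Then $\psi_k$ is smooth and compactly supported up to the boundary, and $\psi_k=\Psi_\varepsilon$ near $\Sigma$. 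Hence \eqref{eq:compact-Reilly} gives
\[
\mathcal B_\Sigma(\Psi_\varepsilon)=\frac12\int_M\norm{\nabla\psi_k}^2+\frac12\int_M\norm{\nabla\psi_k+X^\flat\otimes\psi_k}^2+\frac14\int_M\mathcal E\norm{\psi_k}^2-\int_M\norm{D\psi_k}^2 .
\]
For the last term we use $D\psi_k=\chi_k D\Psi_\varepsilon+c(\nabla\chi_k)\Psi_\varepsilon=i\varepsilon\chi_k\Psi_\varepsilon+c(\nabla\chi_k)\Psi_\varepsilon$. Since $\Psi_\varepsilon\in L^2$ and $\norm{\nabla\chi_k}\le2/k$, we get $\int_M\norm{D\psi_k}^2\to\varepsilon^2\Norm{\Psi_\varepsilon}_{L^2}^2$.

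Next we get uniform bounds. The left-hand side is fixed, the last term is bounded, and the first three terms on the right are nonnegative because $\mathcal E\ge0$. So $\int_M\norm{\nabla\psi_k}^2$ is bounded uniformly in $k$. Writing $\nabla\psi_k=\chi_k\nabla\Psi_\varepsilon+d\chi_k\otimes\Psi_\varepsilon$, and noting that the error term tends to zero in $L^2$, this gives $\int_{B(o,k)}\norm{\nabla\Psi_\varepsilon}^2\le C$. Letting $k\to\infty$ yields $\nabla\Psi_\varepsilon\in L^2$, hence $\Psi_\varepsilon\in H^1(M,\SM)$. This is the \emph{key point}: no assumption on the ends is needed, because the sign of $\mathcal E$ together with the a priori $L^2$ bound coming from the shift does all the work.

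Once $\nabla\Psi_\varepsilon\in L^2$, we have $\nabla\psi_k\to\nabla\Psi_\varepsilon$ in $L^2$ by dominated convergence plus the vanishing gradient error. The $X$-term is the main obstacle, since $X$ need not be bounded and $X^\flat\otimes\psi_k$ need not converge in $L^2$. To handle it we pass to the limit using monotone and Fatou-type arguments rather than dominated convergence. The quantities $\chi_k^2\norm{\nabla\Psi_\varepsilon+X^\flat\otimes\Psi_\varepsilon}^2$ and $\chi_k^2\mathcal E\norm{\Psi_\varepsilon}^2$ are nonnegative and increase to their limits if $\chi_k$ is chosen monotone in $k$. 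The cross terms come from $\nabla\psi_k+X^\flat\otimes\psi_k=\chi_k(\nabla\Psi_\varepsilon+X^\flat\otimes\Psi_\varepsilon)+d\chi_k\otimes\Psi_\varepsilon$. They are bounded via Cauchy--Schwarz by $\frac{2}{k}\Norm{\Psi_\varepsilon}_{L^2}\bigl(\int\chi_k^2\norm{\nabla\Psi_\varepsilon+X^\flat\otimes\Psi_\varepsilon}^2\bigr)^{1/2}$. The last factor is uniformly bounded by the identity itself, since all terms are nonnegative. So these cross terms tend to zero, and monotone convergence yields \eqref{eq:global-Reilly}, with all integrals finite.

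Finally, for general $\varphi\in H^{1/2}(\Sigma,\mathbf S^+)$ we approximate by smooth $\varphi_m$. The isomorphism of the previous proposition gives $\Psi^{(m)}_\varepsilon\to\Psi_\varepsilon$ in $H^1_D$, and the boundary form $\mathcal B_\Sigma$ is continuous on $H_D^1$. The identity \eqref{eq:global-Reilly} applied to the differences bounds $\Norm{\nabla(\Psi^{(m)}_\varepsilon-\Psi^{(m')}_\varepsilon)}_{L^2}$, since $\mathcal B_\Sigma$ is a continuous quadratic form and $\mathcal B_\Sigma(\psi)+\varepsilon^2\Norm{\psi}_{L^2}^2$ dominates $\frac12\Norm{\nabla\psi}^2$. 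So the sequence is Cauchy in $H^1$, and Fatou's lemma together with polarisation passes the identity to the limit. The $D+i\varepsilon$ case is identical, since only $\norm{D\psi_k}^2$ changes, and it has the same limit $\varepsilon^2\Norm{\Psi_\varepsilon}^2$.
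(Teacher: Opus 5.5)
Your proof is correct and follows essentially the same cutoff argument as the paper: apply the compact identity to $\chi_k\Psi_\varepsilon$, use $D(\chi_k\Psi_\varepsilon)\to i\varepsilon\Psi_\varepsilon$ in $L^2$ together with the nonnegativity of the bulk terms to obtain $\nabla\Psi_\varepsilon\in L^2$, and then pass to the limit in each term. The main difference is that you first restrict to smooth $\varphi$ and approximate at the end, whereas the paper applies the compact identity directly to the compactly supported $H^1$ spinor $\chi_R\Psi_\varepsilon$ by density; this makes your final approximation step unnecessary. The paper also uses Fatou followed by $L^2$ and dominated convergence instead of monotone cutoffs.
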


\begin{proof}
Completeness of $M$ and compactness of $\Sigma$ provide smooth cutoffs $\chi_R$ with compact support, $0\leq\chi_R\leq1$, equal to one near $\Sigma$, such that $\chi_R\to1$ pointwise and $\Norm{\dd\chi_R}_{L^\infty(M)}\to0$. Since $\Psi_\varepsilon\in H_D^1(M,\SM)$, $\chi_R\Psi_\varepsilon$ belongs to $H^1(M,\SM)$ and has compact
support. Hence \eqref{eq:compact-Reilly}, which extends by density to compactly supported $H^1$ spinors, applies to $\chi_R\Psi_\varepsilon$. Denoting by $\mathcal Q_R$ the sum of the three nonnegative bulk terms in \eqref{eq:compact-Reilly}, we obtain, since $\chi_R=1$ near $\Sigma$,
\[
\mathcal Q_R
=
\mathcal B_\Sigma(\Psi_\varepsilon)
+\Norm{D(\chi_R\Psi_\varepsilon)}_{L^2(M)}^2.
\]
Since $\Psi_\varepsilon\in L^2(M)$, the cutoff properties and the equation $(D-i\varepsilon)\Psi_\varepsilon=0$ imply
\[
D(\chi_R\Psi_\varepsilon)
=
i\varepsilon\chi_R\Psi_\varepsilon
+c(\dd\chi_R)\Psi_\varepsilon
\longrightarrow
i\varepsilon\Psi_\varepsilon
\quad\text{in }L^2(M),
\]
hence 
\[
\mathcal Q_R\to\mathcal B_\Sigma(\Psi_\varepsilon)+\varepsilon^2\Norm{\Psi_\varepsilon}_{L^2(M)}^2.
\]
Fatou's lemma, applied to the nonnegative bulk integrand, now yields
\begin{equation}\label{eq:integrability}
 \nabla\Psi_\varepsilon\in L^2,
 \quad
 \nabla\Psi_\varepsilon+X^\flat\otimes\Psi_\varepsilon\in L^2,
 \quad
 \mathcal E^{1/2}\Psi_\varepsilon\in L^2.
\end{equation}
Together with the already known $L^2$ property of $\Psi_\varepsilon$, the first statement is precisely $\Psi_\varepsilon\in H^1(M,\SM)$.

We may now pass to the limit in the identity itself.  For example,
\[
 \nabla(\chi_R\Psi_\varepsilon)
 =\chi_R\nabla\Psi_\varepsilon
  +\dd\chi_R\otimes\Psi_\varepsilon
 \longrightarrow\nabla\Psi_\varepsilon
 \quad\text{in }L^2,
\]
and the same argument, using the second term in \eqref{eq:integrability}, treats $\nabla(\chi_R\Psi_\varepsilon)+X^\flat\otimes\chi_R\Psi_\varepsilon$. Dominated convergence applies to $\chi_R^2\mathcal E\norm{\Psi_\varepsilon}^2$, while the displayed formula for $D(\chi_R\Psi_\varepsilon)$ treats the Dirac term. This proves \eqref{eq:global-Reilly}.

The proof for the analogous $D+i\varepsilon$ problem with boundary condition $P^-(\Psi_\varepsilon|_\Sigma)=\varphi$, where $\varphi\in H^{1/2}(\Sigma,\mathbf S^-)$, is identical.
\end{proof}

\subsection{Proof of Theorem~\ref{thm:complete-spectral}}

    Set $b_k:=\min_{\Sigma_k}\beta>0$ and fix a boundary component $\Sigma_j$. We shall repeatedly use the following consequence of \eqref{eq:swap} and self-adjointness: every $\psi\in H^{1/2}(\Sigma,\mathbf S)$ satisfies
    \begin{equation}\label{eq:boundary-splitting}
	\operatorname{Re}\int_\Sigma
	\ip{\mathcal D^\Sigma\psi}{\psi}
	=
	2\operatorname{Re}\int_\Sigma
	\ip{\mathcal D^\Sigma P^+\psi}{P^-\psi}.
    \end{equation}

    We first prove that $\mathcal D^{\Sigma_j}$ has trivial kernel. Suppose that $0\neq\Phi\in\ker\mathcal D^{\Sigma_j}$ and write
    $\Phi_\pm=P^\pm\Phi$. By \eqref{eq:swap},
    \[
    \mathcal D^{\Sigma_j}\Phi_\pm=0,
    \]
    and at least one of $\Phi_+$ and $\Phi_-$ is nonzero. Assume first that $\Phi_+\neq0$, extend it by zero to the other boundary components, still denoting the resulting spinor by $\Phi_+$, and solve \eqref{eq:MIT-problem} with boundary datum $\Phi_+$. Identity \eqref{eq:balance} gives
    \[
    \varepsilon^2
    \Norm{\Psi_\varepsilon}_{L^2(M)}^2
    \leq
    \frac{\varepsilon}{2}
    \Norm{\Phi_+}_{L^2(\Sigma_j)}^2.
    \]
    Since $\mathcal D^\Sigma\Phi_+=0$ on $\Sigma$, \eqref{eq:boundary-splitting} shows that the boundary Dirac term vanishes. Denoting by $\mathcal Q_\varepsilon$ the sum of the three nonnegative bulk terms in \eqref{eq:global-Reilly}, we therefore obtain
    \[
    0\leq\mathcal Q_\varepsilon
    =
    \mathcal B_\Sigma(\Psi_\varepsilon)
    +\varepsilon^2\Norm{\Psi_\varepsilon}_{L^2(M)}^2
    \leq
    -\frac{b_j}{2}\Norm{\Phi_+}_{L^2(\Sigma_j)}^2
    +\varepsilon^2\Norm{\Psi_\varepsilon}_{L^2(M)}^2
    \leq
    -\frac{b_j-\varepsilon}{2}
    \Norm{\Phi_+}_{L^2(\Sigma_j)}^2,
    \]
    which is impossible for $0<\varepsilon<b_j$. If instead $\Phi_-\neq0$, the same argument applies to the $D+i\varepsilon$ problem, using the analogous identity~\eqref{eq:balance} and \eqref{eq:boundary-splitting} with $P^+$ and $P^-$ interchanged. Thus $\ker\mathcal D^{\Sigma_j}=0$, which is equivalent to $\ker D^{\Sigma_j}=0$ under the standard identification recalled above.
	
    We now prove the eigenvalue estimate. Since $\mathcal D^{\Sigma_j}$ has trivial kernel, the symmetry of its spectrum and the identification recalled above show that its smallest positive eigenvalue is $\lambda(\Sigma_j)$. Let $\Phi$ satisfy
    \[  
    \mathcal D^{\Sigma_j}\Phi=\lambda\Phi,
    \qquad
    \lambda=\lambda(\Sigma_j)>0,
    \]
    and write $\Phi_\pm=P^\pm\Phi$. By \eqref{eq:swap},
    \[
    \mathcal D^{\Sigma_j}\Phi_\pm=\lambda\Phi_\mp,
    \]
    and self-adjointness yields
    \[
    \Norm{\Phi_+}_{L^2(\Sigma_j)}^2
    =
    \Norm{\Phi_-}_{L^2(\Sigma_j)}^2
    =:a>0.
    \]

    Extend $\Phi_+$ by zero to the other boundary components, still denoting the resulting spinor by $\Phi_+$, and let $\Psi_\varepsilon$ solve \eqref{eq:MIT-problem} with this boundary datum. By \eqref{eq:boundary-splitting},
    \begin{align*}
	\operatorname{Re}\int_\Sigma
	\ip{\mathcal D^\Sigma\Psi_\varepsilon}{\Psi_\varepsilon}
	\leq
	\lambda\left(
	a+\Norm{P^-\Psi_\varepsilon}_{L^2(\Sigma_j)}^2
	\right).
    \end{align*}
    Combining the preceding boundary estimate with \eqref{eq:balance} and \eqref{eq:global-Reilly}, and using $\beta\geq b_k$ on each $\Sigma_k$, we obtain
    \[
    0\leq\mathcal Q_\varepsilon
    \leq
    \left(\lambda-\frac{b_j}{2}\right)
    \left(
    a+\Norm{P^-\Psi_\varepsilon}_{L^2(\Sigma_j)}^2
    \right)
    +\frac{\varepsilon}{2}a,
    \]
    where the nonpositive contributions of the other boundary components have been discarded. Since
    \[
    a+\Norm{P^-\Psi_\varepsilon}_{L^2(\Sigma_j)}^2\geq a>0,
    \]
    the preceding inequality implies
    \[
    \lambda-\frac{b_j}{2}
    \geq
    -\frac{\varepsilon a}
    {2\left(
    	a+\Norm{P^-\Psi_\varepsilon}_{L^2(\Sigma_j)}^2
    	\right)}
    \geq-\frac{\varepsilon}{2}.
    \]
    Letting $\varepsilon$ tend to zero proves the desired estimate.
	
	Suppose now that equality holds for $\Sigma_j$, so that $\lambda=b_j/2$. Retaining all the nonpositive boundary contributions in the preceding computation gives
	\begin{align}\label{eq:small-energy}
		0
		\leq
		\mathcal Q_\varepsilon
		+\frac12\int_{\Sigma_j}
		(\beta-b_j)\norm{\Psi_\varepsilon}^2
		+\frac12\sum_{k\neq j}\int_{\Sigma_k}
		\beta\norm{\Psi_\varepsilon}^2
		\leq
		\varepsilon^2
		\Norm{\Psi_\varepsilon}_{L^2(M)}^2
		\leq\frac{\varepsilon }{2}a.
	\end{align}
	In particular,
	\[
	\Norm{\nabla\Psi_\varepsilon}_{L^2(M)}^2
	\leq\varepsilon a,
	\qquad
	\Norm{\Psi_\varepsilon}_{L^2(\Sigma_j)}^2
	\leq2a.
	\]
	For every connected compact subdomain $M_0$ containing $\Sigma$, the standard trace--Poincar\'e estimate
	\[
	\Norm{\psi}_{L^2(M_0)}
	\leq
	C_{M_0}\left(
	\Norm{\nabla\psi}_{L^2(M_0)}
	+\Norm{\psi}_{L^2(\Sigma_j)}
	\right)
	\]
	therefore gives locally uniform $H^1$ bounds. Rellich compactness, the compactness of the trace, and a diagonal argument provide a sequence $\varepsilon_\ell$ which tends to zero and a spinor $\Psi_0\in H^1_{\mathrm{loc}}(M,\SM)$ such that
	\[
	\Psi_{\varepsilon_\ell}\rightharpoonup\Psi_0
	\quad\text{in }H^1_{\mathrm{loc}}(M),
	\qquad
	\Psi_{\varepsilon_\ell}\longrightarrow\Psi_0
	\quad\text{in }L^2_{\mathrm{loc}}(M)
	\text{ and }L^2(\Sigma).
	\]
	It follows from \eqref{eq:small-energy} that
	\[
	\nabla\Psi_0=0,
	\qquad
	X^\flat\otimes\Psi_0=0,
	\qquad
	\mathcal E\norm{\Psi_0}^2=0.
	\]
	Moreover,
	\[
	P^+\Psi_0|_{\Sigma_j}=\Phi_+,
	\]
	so $\Psi_0$ is nonzero. Since $M$ is connected, its norm is therefore a positive constant. Consequently, $X=0$, and the standard curvature
	identity for a parallel spinor gives $\Ric_h=0$.
	
	Passing to the limit in the boundary terms of \eqref{eq:small-energy} yields
	\[
	\int_{\Sigma_j}(\beta-b_j)\norm{\Psi_0}^2=0,
	\qquad
	\int_{\Sigma_k}\beta\norm{\Psi_0}^2=0
	\quad\text{for every }k\neq j.
	\]
	Since $\norm{\Psi_0}$ is a positive constant, the first equality gives $\beta=b_j$ on $\Sigma_j$, whereas the second one is incompatible with $\beta\geq b_k>0$ on any other boundary component. Thus $\Sigma$ is connected and, since $X=0$,
	\[
	\beta=H=b_j
	\]
	is constant on $\Sigma$.
	
	Finally, since $(M,h)$ is connected and complete with compact boundary, $\Ric_h=0$, and $H=b_j>0$ on $\Sigma$, Kasue's inradius estimate \cite[Theorem~A]{Kasue} implies that $M$ is compact; hence equality cannot occur in the noncompact case.
	
\subsection{Proof of Theorem~\ref{thm:complete-total}}
    
    Fix a boundary component $\Sigma_j$. Using the intrinsic identification of the adapted boundary spinor bundle recalled above, choose, as in \cite[Proposition~2]{BaerTotal}, an integer $r\geq1$ and spinors
    \[
    \varphi_1,\ldots,\varphi_r
    \in C^\infty(\Sigma_j,\mathbf S|_{\Sigma_j}).
    \]
    Since $P^\pm$ are orthogonal projections onto half-rank subbundles, the family may be normalized so that
    \begin{equation}\label{eq:Baer-family}
    	\sum_{\alpha=1}^r\norm{P^\pm\varphi_\alpha}^2=1.
    \end{equation}
    Fixing such a family, set
    \[
    	C(\Sigma_j)
    	:=
    	4\sum_{\alpha=1}^r
    	\Norm{\mathcal D^{\Sigma_j}\varphi_\alpha}_{L^2(\Sigma_j)}
    	\Norm{\varphi_\alpha}_{L^2(\Sigma_j)}.
    \]
	
	For each $\alpha$, extend $P^+\varphi_\alpha$ by zero to the other boundary components and let $\Psi_{\varepsilon,\alpha}$ solve
	\[
	(D-i\varepsilon)\Psi_{\varepsilon,\alpha}=0,
	\qquad
	P^+\Psi_{\varepsilon,\alpha}=P^+\varphi_\alpha.
	\]
	Set
	\[
	I_{\varepsilon,\alpha}
	:=
	\operatorname{Re}\int_\Sigma
	\ip{\mathcal D^\Sigma\Psi_{\varepsilon,\alpha}}
	{\Psi_{\varepsilon,\alpha}}.
	\]
	Since the bulk terms in \eqref{eq:global-Reilly} are nonnegative, $\beta\geq0$, and
	\[
	\sum_{\alpha=1}^r
	\norm{P^+\Psi_{\varepsilon,\alpha}}^2
	=
	\sum_{\alpha=1}^r
	\norm{P^+\varphi_\alpha}^2
	=1
	\quad\text{on }\Sigma_j,
	\]
	we obtain
	\begin{equation}\label{eq:total-Reilly}
		\frac12\int_{\Sigma_j}\beta
		\leq
		\sum_{\alpha=1}^r I_{\varepsilon,\alpha}
		+\varepsilon^2\sum_{\alpha=1}^r
		\Norm{\Psi_{\varepsilon,\alpha}}_{L^2(M)}^2.
	\end{equation}
	
	On every component different from $\Sigma_j$, the contribution to $I_{\varepsilon,\alpha}$ vanishes because the prescribed $P^+$ trace is zero. On $\Sigma_j$, \eqref{eq:boundary-splitting} and \eqref{eq:balance} give
	\begin{align*}
		I_{\varepsilon,\alpha}
		 =
		2\operatorname{Re}\int_{\Sigma_j}
		\ip{\mathcal D^{\Sigma_j}P^+\varphi_\alpha}
		{P^-\Psi_{\varepsilon,\alpha}}                                      
		\leq
		2\Norm{\mathcal D^{\Sigma_j}\varphi_\alpha}_{L^2(\Sigma_j)}
		\Norm{\varphi_\alpha}_{L^2(\Sigma_j)},
	\end{align*}
	while
	\[
	\varepsilon^2
	\Norm{\Psi_{\varepsilon,\alpha}}_{L^2(M)}^2
	\leq
	\frac{\varepsilon}{2}
	\Norm{P^+\varphi_\alpha}_{L^2(\Sigma_j)}^2.
	\]
	Substituting these estimates into \eqref{eq:total-Reilly} and using \eqref{eq:Baer-family}, we find
	\[
	\frac12\int_{\Sigma_j}\beta
	\leq
	2\sum_{\alpha=1}^r
	\Norm{\mathcal D^{\Sigma_j}\varphi_\alpha}_{L^2(\Sigma_j)}
	\Norm{\varphi_\alpha}_{L^2(\Sigma_j)}
	+\frac{\varepsilon}{2}\vol(\Sigma_j).
	\]
	Letting $\varepsilon$ tend to zero gives
	\[
	\int_{\Sigma_j}\beta\leq C(\Sigma_j).
	\]
	By construction, $C(\Sigma_j)$ depends only on the intrinsic Riemannian spin geometry of $\Sigma_j$.


\section{Applications to initial data sets}\label{sec:jang}


Our argument follows the Jang-equation reduction introduced by Schoen and Yau in \cite{SchoenYau}.


\subsection{The complete Jang graph}


Jang's equation for $u$ is
\begin{equation}\label{eq:Jang}
	\left(
	g^{ij}-\frac{u^iu^j}{1+\norm{Du}_g^2}
	\right)
	\left(
	\frac{\nabla^2_{ij}u}{\sqrt{1+\norm{Du}_g^2}}-K_{ij}
	\right)
	=0.
\end{equation}
We use the following compact-boundary version of the capillarity-limit construction. In dimension three, it follows from \cite[Theorems~3.1--3.2]{AEM}; for $4\leq n\leq7$, the same conclusion is obtained by combining the two-sided boundary barriers described in \cite[Section~6.1]{HKKZ} with the almost-minimizing compactness and regularity argument of \cite[Proposition~7 and Corollary~8]{Eichmair}. 

If $3\leq n\leq7$ and
\[
H>|\tr_{\partial\Omega}K|
\quad\text{on }\partial\Omega,
\]
there exist an open set $\Omega_0\subset\Omega$, containing a collar of $\partial\Omega$, and a smooth solution $u$ of \eqref{eq:Jang} on $\Omega_0$, smooth up to $\partial\Omega$, such that $u=0$ there. Moreover, $\partial\Omega_0\setminus\partial\Omega$ is a finite union of smooth closed apparent horizons, and $u$ tends uniformly to $+\infty$ or $-\infty$ along each of its components. Every connected component of the graph meeting $\partial\Omega$ is complete for
\[
\widehat g=g+\dd u^2.
\]

Put $v=\sqrt{1+\norm{Du}_g^2}$ and orient the graph in $(\Omega\times\R,g+\dd t^2)$ by the downward unit normal
\[
\widehat\nu=\frac{Du-\partial_t}{v}.
\]
Its second fundamental form is $\widehat h_{ij}=\nabla^2_{ij}u/v$. Define
\[
	w_i=\frac{u_i}{v},
	\qquad
	q_i=\frac{u^j}{v}
	\bigl(\widehat h_{ij}-K_{ij}\bigr).
\]
The Schoen--Yau identity \cite[equation~(10)]{Eichmair} reads
\begin{equation}\label{eq:SY}
	\widehat R-2\norm{q}_{\widehat g}^2
	+2\widehat\diver q
	=
	2\bigl(\mu-J(w)\bigr)
	+\norm{\widehat h-K}_{\widehat g}^2
	\geq0,
\end{equation}
where the last inequality follows from the dominant energy condition and $\norm{w}_g<1$.

Since $u=0$ on $\partial\Omega$, its tangential differential vanishes there and $\widehat g|_{T\Sigma}=g|_{T\Sigma}$. The boundary computation of Yau \cite[Section~5, equations~(5.6)--(5.10)]{Yau}, see also \cite[equation~(14)]{LiuYau}, gives
\begin{equation}\label{eq:Jang-boundary}
	\widehat H-q(\widehat N)
	=
	\sqrt{1+\norm{Du}_g^2}\,H
	-N(u)P
	\geq
	\sqrt{H^2-P^2}
	=
	\norm{\mathcal H},
\end{equation}
where $N$ and $\widehat N$ are the outward unit normals to $\partial\Omega$ in $(\Omega,g)$ and to the boundary of the graph, respectively, and $P=\tr_\Sigma K$. Since $u=0$ on $\partial\Omega$, we have $Du=N(u)N$ there, and the inequality follows from the untrapped condition.


\subsection{Proofs of the main results}


\begin{proof}[Proof of Theorem~\ref{thm:initial-spectral}]
	Fix a component $\Sigma_j$ and let $M_j$ be the connected component of the Jang graph meeting it. Then $(M_j,\widehat g)$ is a complete spin manifold with compact boundary, and its intrinsic boundary geometry agrees with that induced by $g$. With $X=q^\sharp$, \eqref{eq:SY} and \eqref{eq:Jang-boundary} verify the hypotheses of
	Theorem~\ref{thm:complete-spectral} and give
	\[
	\lambda(\Sigma_j)
	\geq
	\frac12\min_{\Sigma_j}
	\bigl(\widehat H-q(\widehat N)\bigr)
	\geq
	\frac12\min_{\Sigma_j}\norm{\mathcal H}.
	\]
	This proves \eqref{eq:initial-spectral}.
	
	Assume now that equality holds. Both inequalities above are then equalities. The rigidity statement in Theorem~\ref{thm:complete-spectral} shows that $M_j$ is compact, its boundary is connected, $q=0$, and $(M_j,\widehat g)$ carries a nonzero parallel spinor; in particular, $\widehat g$ is Ricci-flat. Any interior boundary component of the graphical domain would force the height function $u$ to be unbounded, contradicting the compactness of $M_j$. The graphical domain is therefore all of $\Omega$; hence $\partial M_j=\Sigma$, which is connected by the rigidity statement.
	
	With $q=0$ and $\widehat R=0$, \eqref{eq:SY} implies that its nonnegative right-hand side vanishes, and hence $\widehat h=K$. Identify $M_j$ with $\Omega$ through the graph projection and consider
	\[
	F:\Omega\longrightarrow M_j\times\R,
	\qquad
	F(x)=(x,u(x)),
	\]
	where the product is endowed with the Ricci-flat Lorentzian metric $\widehat g-\dd t^2$. Using $\widehat g=g+\dd u^2$ and $\widehat\nabla^2u=v^{-2}\nabla_g^2u$, we obtain
	\[
	F^*(\widehat g-\dd t^2)=g,
	\qquad
	\operatorname{II}_F
	=
	v\widehat\nabla^2u
	=
	\frac{\nabla_g^2u}{v}
	=
	\widehat h
	=
	K,
	\]
	where the second fundamental form is computed with respect to the unit normal $v(\partial_t+\widehat\nabla u)$. This gives the required spacelike isometric immersion. When $n=3$, the Ricci-flat metric $\widehat g$ is flat, so the ambient product is locally isometric to Minkowski spacetime.
	
	Finally,
	\[
	\widehat H-q(\widehat N)
	=
	2\lambda(\Sigma)
	=
	\min_\Sigma\norm{\mathcal H}
	\]
	is constant. Together with \eqref{eq:Jang-boundary}, this gives
	\[
	\widehat H-q(\widehat N)
	\geq\norm{\mathcal H}
	\geq\min_\Sigma\norm{\mathcal H},
	\]
	so equality holds throughout and $\norm{\mathcal H}$ is constant on $\Sigma$.
\end{proof}

\begin{proof}[Proof of Theorem~\ref{thm:total}]
	Fix $\Sigma_j$, let $M_j$ be the connected component of the Jang graph meeting $\Sigma_j$, and apply Theorem~\ref{thm:complete-total} to $M_j$ with $X=q^\sharp$. Equations \eqref{eq:SY} and \eqref{eq:Jang-boundary} give
	\[
	\int_{\Sigma_j}\norm{\mathcal H}
	\leq
	\int_{\Sigma_j}\bigl(\widehat H-q(\widehat N)\bigr)
	\leq
	C(\Sigma_j),
	\]
	which proves \eqref{eq:total}.
\end{proof}

\begin{remark}
	The boundary data used here retain less information than the full spacetime Bartnik data \cite{Bartnik97}. In the notation used here, these are $(\Sigma,\gamma,H,\alpha,P)$, where
	\[
	\alpha=K(N,\cdot)|_{T\Sigma}.
	\]
	The corresponding notion of a DEC spin fill-in was considered in \cite{RaulotDEC}. Since $\alpha$ is not prescribed here, every DEC spin fill-in in the sense of \cite{RaulotDEC} is also a DEC spin fill-in in the present sense.
	
	Moreover, whenever the generalized mean curvature used in \cite{RaulotDEC},
	\[
	\mathfrak h:
	=
	H-\sqrt{\norm{\alpha}_\gamma^2+P^2},
	\]
	is positive, the data are untrapped and
	\[
	\mathfrak h
	\leq H-|P|
	\leq\sqrt{H^2-P^2}
	=\norm{\mathcal H}.
	\]
	Thus Corollary~\ref{cor:radius} and Theorem~\ref{thm:total} also give, respectively, upper bounds for the minimum and the integral of $\mathfrak h$.
\end{remark}




\begin{thebibliography}{99}

\bibitem{AEM}
L.~Andersson, M.~Eichmair, and J.~Metzger,
\emph{Jang's equation and its applications to marginally trapped
surfaces}, in Complex Analysis and Dynamical Systems IV, Part 2,
Contemp. Math. \textbf{554} (2011), 13--45.

\bibitem{AmmannLockman}
B.~Ammann and S.~Lockman,
\emph{Rigidity for spin fill-ins with scalar curvature bounded from
below}, arXiv:2608.06935 (2026).

\bibitem{BaerRadius}
C.~B\"ar,
\emph{Dirac eigenvalues and the hyperspherical radius},
J. Eur. Math. Soc. (2026), DOI 10.4171/JEMS/1754.

\bibitem{BaerTotal}
C.~B\"ar,
\emph{Upper bound for the total mean curvature of spin fill-ins},
arXiv:2601.06713v3 (2026).

\bibitem{BaerBallmann}
C.~B\"ar and W.~Ballmann,
\emph{Boundary value problems for elliptic differential operators of first order},
Surveys in Differential Geometry \textbf{17} (2012), 1--78.

\bibitem{Bartnik97}
R.~Bartnik,
\emph{Energy in general relativity},
in \emph{Tsing Hua Lectures on Geometry and Analysis
	(Hsinchu, 1990--1991)},
International Press, Cambridge, MA, 1997, pp.~5--27.

\bibitem{BTW}
S.~Brendle, R.~Tsiamis, and Y.~Wang,
\emph{On fill-ins with scalar curvature bounded from below and an
inequality of Hijazi--Montiel--Rold\'an}, arXiv:2510.17780v2 (2025).

\bibitem{CHZ}
S.~Cecchini, S.~Hirsch, and R.~Zeidler,
\emph{Rigidity of spin fill-ins with non-negative scalar curvature}, arXiv:2404.17533 (2024).

\bibitem{Eichmair}
M.~Eichmair,
\emph{The Jang equation reduction of the spacetime positive energy
theorem in dimensions less than eight},
Comm. Math. Phys. \textbf{319} (2013), 575--593.

\bibitem{Gromov}
M.~Gromov,
\emph{Four lectures on scalar curvature}, in Perspectives in Scalar
Curvature, Vol.~1, World Scientific, Hackensack, NJ, 2023, 1--514.

\bibitem{GrosseNakad}
N.~Gro{\ss}e and R.~Nakad,
\emph{Boundary value problems for noncompact boundaries of
	$\Spin^c$ manifolds and spectral estimates},
Proc. Lond. Math. Soc. (3) \textbf{109} (2014), 946--974.

\bibitem{HMZ}
O.~Hijazi, S.~Montiel, and X.~Zhang,
\emph{Dirac operator on embedded hypersurfaces},
Math. Res. Lett. \textbf{8} (2001), 195--208.

\bibitem{HKKZ}
S.~Hirsch, D.~Kazaras, M.~Khuri, and Y.~Zhang,
\emph{Spectral torical band inequalities and generalizations of the
Schoen--Yau black hole existence theorem},
Int. Math. Res. Not. IMRN \textbf{2024} (2024), 3139--3175.

\bibitem{Jauregui}
J.~L.~Jauregui,
\emph{Fill-ins of nonnegative scalar curvature, static metrics, and
	quasi-local mass},
Pacific J. Math. \textbf{261} (2013), 417--444.

\bibitem{Kasue}
A.~Kasue,
\emph{Ricci curvature, geodesics and some geometric properties of
Riemannian manifolds with boundary},
J. Math. Soc. Japan \textbf{35} (1983), 117--131.

\bibitem{LiuYau}
C.-C.~M. Liu and S.-T.~Yau,
\emph{Positivity of quasi-local mass II},
J. Amer. Math. Soc. \textbf{19} (2006), 181--204.

\bibitem{MantoulidisMiao}
C.~Mantoulidis and P.~Miao,
\emph{Total mean curvature, scalar curvature, and a variational analog
	of Brown--York mass},
Comm. Math. Phys. \textbf{352} (2017), 703--718.

\bibitem{Miao}
P.~Miao,
\emph{Nonexistence of NNSC fill-ins with large mean curvature},
Proc. Amer. Math. Soc. \textbf{149} (2021), 2705--2709.

\bibitem{Raulot}
S.~Raulot,
\emph{The Dirac operator on untrapped surfaces},
Comm. Math. Phys. \textbf{318} (2013), 411--427.

\bibitem{RaulotDEC}
S.~Raulot,
\emph{Nonexistence of DEC spin fill-ins},
Comptes Rendus Math. \textbf{360} (2022), 1049--1054.

\bibitem{SchoenYau}
R.~Schoen and S.-T.~Yau,
\emph{Proof of the positive mass theorem. II},
Comm. Math. Phys. \textbf{79} (1981), 231--260.

\bibitem{ShiTam}
Y.~Shi and L.-F.~Tam,
\emph{Positive mass theorem and the boundary behaviors of compact
	manifolds with nonnegative scalar curvature},
J. Differential Geom. \textbf{62} (2002), 79--125.

\bibitem{ShiWangWei}
Y.~Shi, W.~Wang, and G.~Wei,
\emph{Total mean curvature of the boundary and nonnegative scalar
	curvature fill-ins},
J. Reine Angew. Math. \textbf{784} (2022), 215--250.

\bibitem{Yau}
S.-T.~Yau,
\emph{Geometry of three manifolds and existence of black hole due to
boundary effect},
Adv. Theor. Math. Phys. \textbf{5} (2001), 755--767.

\end{thebibliography}
\end{document}